\definecolor{darkgreen}{rgb}{0.06, 0.56, 0.2}
\newcommand{\vc}[1]{{\boldsymbol #1}}
\newcommand{\sr}[1]{{\mathcal #1}}
\newcommand{\dd}[1]{\mathbb{#1}}
\newcommand{\br}[1]{\langle #1 \rangle}
\newcommand{\ol}{\overline}
\newcommand{\eq}[1]{(\ref{eq:#1})}
\newcommand{\lem}[1]{Lemma~\ref{lem:#1}}
\newcommand{\thr}[1]{Theorem~\ref{thr:#1}}
\newcommand{\pro}[1]{Proposition~\ref{pro:#1}}
\newcommand{\dfn}[1]{Definition~\ref{dfn:#1}}
\newcommand{\ass}[1]{Assumption~\ref{ass:#1}}
\newcommand{\rem}[1]{Remark~\ref{rem:#1}}
\newcommand{\app}[1]{Appendix~\ref{app:#1}}
\newcommand{\sectn}[1]{Section~\ref{sec:#1}}
\newcommand{\lemt}[1]{\ref{lem:#1}}
\newcommand{\pend}{\hfill \thicklines \framebox(6.6,6.6)[l]{}}
\newenvironment{proof*}[1]{\noindent {\sc  #1} \rm}{\pend}
\renewcommand{\theequation}{\thesection.\arabic{equation}}
\newtheorem{theorem}{Theorem}[section]
\newtheorem{lemma}{Lemma}[section]
\newtheorem{assumption}{Assumption}[section]
\newtheorem{proposition}{Proposition}[section]
\newtheorem{remark}{Remark}[section]
\newtheorem{definition}{Definition}[section]
\newcommand{\setnewcounter} {
\setcounter{subsection}{0}
\setcounter{equation}{0}
\setcounter{conjecture}{0}
\setcounter{assumption}{0}
\setcounter{question}{0}
\setcounter{definition}{0}
\setcounter{theorem}{0}
\setcounter{corollary}{0}
\setcounter{lemma}{0}
\setcounter{proposition}{0}
\setcounter{remark}{0}
}
\begin{document}
 \title{\Large \bf Stability of a cascade system with two stations and its extension for multiple stations}

\author{Masakiyo Miyazawa\footnotemark \and Evsey Morozov\footnotemark}
\date{July 9, 2023}

\maketitle

\begin{abstract}
We consider a two-station cascade system in which waiting or externally arriving customers at station $1$ move to the station $2$ if the queue size of station $1$ including an arriving customer itself and a customer being served is greater than a given threshold level $c_{1} \ge 1$ and if station $2$ is empty. Assuming that external arrivals are subject to independent renewal processes satisfying certain regularity conditions and service times are $i.i.d.$ at each station, we derive necessary and sufficient conditions for a Markov process describing this system to be positive recurrent in the sense of Harris. This result is extended to the cascade system with a general number $k$ of stations in series. This extension requires certain traffic intensities of stations $2,3,\ldots, k-1$ for $k \ge 3$ to be defined. We finally note that the modeling assumptions on the renewal arrivals and $i.i.d.$ service times are not essential if the notion of the stability is replaced by a certain sample path condition. This stability notion is identical with the standard stability if the whole system is described by the Markov process which is a Harris irreducible $T$-process. 
\end{abstract}

\footnotetext[1]{Department of Information Sciences,
Tokyo University of Science, Noda, Chiba, Japan \&
	School of Data Science, Chinese University of Hong Kong, Shenzhen, China.}
\footnotetext[2]{Institute of Applied Mathematical Research of Karelian Research Centre, Russian Academy of Sciences, Petrozavodsk, Karelia, Russia; Petrozavodsk State University, Petrozavodsk, 185910; Moscow Center for Fundamental and Applied Mathematics, Moscow State University, Moscow 119991, Russia.}

\section{Introduction}
\label{sec:introduction}
\setnewcounter
We are interested in the stability of a service system with two single server stations, numbered as $1,2$, where the stability means that a Markov process describing this system is positive recurrent in the sense of Harris (see \dfn{Harris 1}). We assume the following system dynamics for this queueing model. Each station has renewal arrivals and $i.i.d.$ service times. Both stations have single servers, which are independently working in parallel, but waiting or exogenously just arriving customers at station $1$ moves to the station $2$ if the queue size of station $1$ including a customer being served is greater than a given threshold level $c_{1} \ge 1$ and if station $2$ is empty, where an exogenously arriving customer is counted in the queue. The customer arriving from station 1 immediately starts its service at station 2, and gets service only when no other customer is in station 2. 
We refer to this queuing system as a 2-station cascade system. Since this system naturally arises in practice but its analysis is not easy, its stability is attracting researchers of queueing systems

For station $i=1,2$, let $\lambda_{i}$ be the arrival rate of exogenously arriving customers, and let $\mu_{i}$ is the service rate of those customers. Let $\rho_{i} = \lambda_{i}/\mu_{i}$. Since some of arriving customers at station 1 may move to station 2, $\rho_{1}$ is not an actual traffic intensity (the mean amount of service processed per unit time), so it is nominal. We define $\rho^{*}_{1}(\xi)$ as the time average of the probability that the station $1$ is not empty for an arbitrarily given initial distribution $\xi_{	}$ of the system at time $0$, which is formally defined as \eq{rho* 1} in \sectn{two station}. We refer to $\rho^{*}_{1}(\xi)$ as an effective traffic intensity at station $1$ given $\xi$.

In this setting, it is expected that the system is stable if and only if $\rho^{*}_{1}(\xi) < 1$ for some $\xi$ and $\rho_{2} < 1$. We prove this characterization of the stability in our framework (see \lem{stability 2}), but it is less tractable because $\rho^{*}_{1}(\xi)$ is hard to compute. In the literasture, a computable condition has been studied (e.g., see \cite{MoroStey2013}), but the stability is not fully answered. For the cascade system with more than two stations, necessary and sufficient conditions are separately considered in \cite{DelgMoro2014}, but there are gaps between those conditions.

In this paper, we solve this stability problem for the 2-station cascade system, assuming certain regularity conditions on the inter-arrival time distributions of exogenous customers at each station. This extends the known results. For example,  \citet{MoroStey2013} obtain tractable sufficient conditions for the stability under the extra assumption that the 1st station has Poisson arrivals. We show that, not assuming Poisson arrivals, those conditions are necessary and sufficient. We further extend this result for a $k$-station cascade system for a general integer $k \ge 3$ using the effective traffic intensities $\rho^{*}_{1}(\xi)$ for $i=2,3,\ldots, k-1$. As those traffic intensities are hard to compute, it is preferable to have tractable stability conditions. Such conditions are conjectured in \cite{MiyaMoro2022a}, but they are disproved by \cite{KimKim2023}.

Both papers of those papers \cite{KimKim2023,MiyaMoro2022a} are based on Theorem 3.1 of the preprint \cite{MiyaMoro2022}. This theorem is correct, but some arguments in \cite{MiyaMoro2022a} are not accurate. These motivate us to write the present paper updating \cite{MiyaMoro2022a} taking \citet{KimKim2023} into account.

Thus, main contributions of this paper are a proof technique for the stability problem and the full characterization of the stability for $k=2$. The proof technique is a combination of sample path analysis similar to fluid approximation and the characterization of the stability based on the so-called Harris irreducibility of a Markov $T$-process with a general state space (see \pro{positive 1}). This approach is different from the method of \citet{MoroStey2013}, which is based on a regenerative process, so is restrictive in applications. Another typical approach to stability problems is to use fluid approximation and Lyapunov functions (see, e.g. \cite{Dai1995}). The fluid approximation largely depends on strong law of large numbers, strong LLN for short. However, it may have certain limitations. For example, \citet{Tezc2013} shows insufficiency for studying stability through fluid approximation (see also \citet{CherFossKim2013}). We encounter a similar situation because the stability condition for $k \ge 3$ requires $\rho^{*}_{j}(\xi)$ for downward stations, which can not be obtained from the fluid approximation. The counterexamples of \cite{KimKim2023} supports this situation.

This paper is made up by six sections. In \sectn{Markov}, we introduce the well-known framework for a Markov process to be positive Harris recurrent, which provides a base for our analysis. In \sectn{two station}, the 2-station cascade system is detailed, and its stability conditions are presented in \thr{stability 1}, which are proved in \sectn{system}. The general $k$-station cascade system is considered in \sectn{general}. Finally, \sectn{concluding} remarks the possibility to relax the renewal assumptions on the 2-station cascade system concerning its stability. The proofs for some auxiliary results are given in the appendix.

\section{Markov process and Harris recurrence}
\label{sec:Markov}
\setnewcounter

Our main concern is the stability of the $2$-station cascade system, which will be described by a Markov process. For formally discussing it, we use the following basic notations. Let $(\Omega,\sr{F},\dd{P})$ be a probability space on which a continuous-time stochastic process $X(\cdot) \equiv \{X(t); t \ge 0\}$ with state space $S$ is defined, where $S$ is a separable and locally compact metric state space. We assume that its sample path is right-continuous and has left-limits, and $X(\cdot)$ is adapted to a filtration $\dd{F} \equiv \{\sr{F}_{t}; t \ge 0\}$, that is, $X(t)$ is $\sr{F}_{t}$-measurable for all $t \ge 0$. Assume that $X(\cdot)$ is a strong Markov process with respect to $\dd{F}$.

For this Markov process $X(\cdot)$, we briefly introduce Harris irreducibility and (null and positive) recurrence. Let  $\sr{B}(S)$ be the Borel field on $S$, that is, the $\sigma$-field on $S$ generated by all open sets of $S$, and let $\dd{P}_{x}(A) = \dd{P}(A|X(0)=x)$ for $x \in S$ and $A \in \sr{F}$. The following definitions are taken from \citet{MeynTwee1993a} (for their discrete counterparts, see Section 8.3 of \citet{MeynTwee2009}).  In what follows, a measure $\varphi$ on $S$ means that it is defined on measurable space $(S,\sr{B}(S))$, and it is said to be non-trivial if $\varphi(S)>0$.

\begin{definition}[Irreducibility and recurrence]
\label{dfn:Harris 1}
For the Markov process $X(\cdot)$, let
\begin{align*}
  \tau_{A} = \inf \{t \ge 0; X(t) \in A\}, \qquad \eta_{A} = \int_{0}^{\infty} 1(X(u) \in A) du, \qquad A \in \sr{B}(S),
\end{align*}
where $1(\cdot)$ denotes the indicator function of proposition ``$\cdot$''.
Then, $X(\cdot)$ is called Harris irreducible, Harris recurrent, positive Harris recurrent if the following conditions (a), (b) and (c) are satisfied, respectively.
\begin{itemize}
\item [(a)] There is a non-trivial $\sigma$-finite measure $\varphi$ on $S$, called an irreducibility measure, such that, for $\forall B \in \sr{B}(S)$, $\varphi(B) > 0$ implies that $\dd{E}_{x}(\eta_{B}) > 0$, for $\forall x \in S$. In this case, $X(\cdot)$ is called $\varphi$-irreducible if $\varphi$ is specified.

\item [(b)] There is a non-trivial $\sigma$-finite measure $\varphi$ on $S$ such that $\varphi(B) > 0$ implies that $\dd{P}_{x}(\eta_{B} = \infty)=1$ for $\forall x \in S$, which is equivalent to that there is a non-trivial $\sigma$-finite measure $\varphi$ on $S$ such that $\varphi(B) > 0$ implies that $\dd{P}_{x}(\tau_{B} < \infty)=1$ for $\forall x \in S$. In this case, there exists a unique invariant measure on $S$ up to a multiplication constant, where $\sigma$-finite measure $\nu$ on $S$ is called an invariant measure if
\begin{align*}
  \nu(A) = \int_{S} \nu(dx) \dd{P}_{x}(X(t) \in A), \qquad \forall A \in \sr{B}(S), \forall t > 0.
\end{align*}

\item [(c)] $X(\cdot)$ is Harris recurrent, and there is an invariant probability measure on $S$, that is, $\nu$ of (b) is a probability measure.
\end{itemize}
In particular, we call $X(\cdot)$ and the stochastic model described by it to be stable if $X(\cdot)$ is positive Harris recurrent.
\end{definition}

\begin{definition}[$K$-chain and $T$-process]
\label{dfn:T-process 1}
(i) For a probability measure $a$ on $(\dd{R}_{+},\sr{B}(\dd{R}_{+}))$, define kernel $K_{a}$ as
\begin{align*}
  K_{a}(x,A) = \int_{0}^{\infty} \dd{P}_{x}(X(t) \in A) a(dt), \qquad x \in S, A \in \sr{B}(S),
\end{align*}
then $K_{a}$ is called a $K$-chain with sampling distribution $a$.\\
(ii) 
The Markov process $X(\cdot)$ is called a $T$-process if there is a kernel $T(x,A)$ for $x \in S$ and $A \in \sr{B}(S)$ such that $T$ is non-trivial, that is, $T(x,S) > 0$ for $\forall x \in S$, $T(x,A)$ is lower semi-continuous in $x \in S$, that is, $T(x,A) \le \liminf_{y \to x} T(y,A)$, and the following inequality holds for some probability measure $a$ on $\dd{R}_{+}$.
\begin{align*}
  K_{a}(x,A) \ge T(x,A), \qquad \forall x \in S, \forall A \in \sr{B}(S).
\end{align*}
\end{definition}

We first note some basic facts for a Harris recurrent Markov process. They are not only used in our proofs, but also helpful to better understand our arguments.

\begin{lemma}\rm
\label{lem:compact 1}[The continuous-time counterpart of Proposition 3.4 of \cite{TuomTwee1979}]
Assume that the Markov process $X(\cdot)$ is a Harris recurrent $T$-process. Denote its $\sigma$-finite invariant measure by $\nu$. Then, $\nu(C)$ is finite for each compact set $C$ of $S$.
\end{lemma}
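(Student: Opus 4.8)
The plan is to reduce the continuous-time statement to its discrete-time analogue, Proposition~3.4 of \cite{TuomTwee1979}, by passing to a sampled chain. Fix the sampling distribution $a$ witnessing that $X(\cdot)$ is a $T$-process, so that $K_{a}(x,A) \ge T(x,A)$ for all $x \in S$ and $A \in \sr{B}(S)$. First I would check that the continuous-time invariant measure $\nu$ is also invariant for the Markov chain on $S$ with one-step transition kernel $K_{a}$: integrating the invariance identity $\nu(A) = \int_{S} \nu(dx)\,\dd{P}_{x}(X(t)\in A)$ against $a(dt)$ and applying Fubini gives $\nu(A) = \int_{S} \nu(dx)\,K_{a}(x,A)$. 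Thus it suffices to prove that the invariant measure of the chain $K_{a}$ is finite on compact sets. Since $K_{a} \ge T$ with $T$ non-trivial and lower semi-continuous, the chain $K_{a}$ is a discrete-time $T$-chain in the sense of \cite{TuomTwee1979}, and one also checks, using the continuous-time Harris recurrence of $X(\cdot)$ together with the sampled-chain results of \cite{MeynTwee1993a}, that $K_{a}$ is recurrent with invariant measure $\nu$. The assertion then follows directly from Proposition~3.4 of \cite{TuomTwee1979}, since compact sets of $S$ are unchanged by the passage to the sampled chain.

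To make the reduction self-contained rather than a black-box citation, I would reprove the two ingredients underlying that discrete-time result. The first is that every compact set $C \subset S$ is petite for the chain $K_{a}$, i.e. there exist a sampling distribution $b$ and a non-trivial measure $\psi$ with $K_{b}(x,\cdot) \ge \psi(\cdot)$ for all $x \in C$. Here the $T$-chain structure is used essentially: because $x \mapsto T(x,B)$ is lower semi-continuous and $T(x,S) > 0$ for every $x$, a compactness and covering argument produces, on each compact $C$, a single minorizing measure after a suitable randomization of the sampling times. The second ingredient is that for a recurrent chain the invariant measure assigns finite mass to every petite set. This I would obtain through the regenerative representation coming from Nummelin splitting: the minorization on a petite set $A$ creates an accessible atom $\alpha$, and $\nu(\cdot)$ is proportional to the expected occupation measure of a single excursion from $\alpha$; since each visit to $A$ sends the split chain into $\alpha$ with probability bounded below by the mass of $\psi$, the expected number of visits to $A$ per excursion is geometrically bounded and hence finite, whence $\nu(A) < \infty$.

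Combining the two ingredients, a compact $C$ is petite and therefore $\nu(C) < \infty$, which is exactly the claim. I expect the genuine obstacle to be the first ingredient, namely upgrading the pointwise, state-dependent lower bound $T(x,\cdot)$ into a uniform-over-$C$ minorization by one fixed measure $\psi$. Lower semi-continuity only provides, for each \emph{fixed} target set $B$, a neighborhood on which $T(\cdot,B)$ stays close to $T(x_{0},B)$, so the difficulty is to patch these set-dependent neighborhoods into a single measure-valued minorization; this is precisely where compactness of $C$ and the careful choice of the randomized sampling distribution $b$ enter. Once compact sets are known to be petite, the remaining steps are routine regenerative bookkeeping.
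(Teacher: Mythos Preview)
Your reduction via the sampled chain $K_{a}$ is exactly the route the paper has in mind: it says the lemma is proved ``in the exactly same way'' as the discrete-time Proposition~3.4, with the $K$-chains playing the role of the one-step kernel, and it singles out as the key point that $\{x \in S : T(x,A) > 0\}$ is open by lower semi-continuity. Where you diverge is in the amount of machinery you propose for the ``self-contained'' version. The actual argument behind Proposition~3.4 is much lighter than petiteness plus Nummelin splitting: once you have (i) $\nu$ is $K_{a}$-invariant (your Fubini step) and (ii) $\nu$ is $\sigma$-finite, write $S = \bigcup_{n} S_{n}$ with $\nu(S_{n}) < \infty$; for each $x$ in a compact $C$ pick $n(x)$ with $T(x,S_{n(x)})>0$, use lower semi-continuity to get a neighborhood $U_{x}$ on which $T(\cdot,S_{n(x)}) \ge \delta_{x} > 0$, extract a finite subcover $U_{x_{1}},\dots,U_{x_{m}}$, and bound $\delta_{x_{j}}\,\nu(U_{x_{j}}) \le \int_{U_{x_{j}}} K_{a}(y,S_{n(x_{j})})\,\nu(dy) \le \nu(S_{n(x_{j})}) < \infty$. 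This gives $\nu(C) < \infty$ in a few lines, with no need to verify Harris recurrence of the sampled chain $K_{a}$ (a step you gloss over and which is not automatic for a general sampling distribution $a$), nor to build a uniform minorization or an atom. So your plan is correct, but the ``genuine obstacle'' you flag---patching local minorizations into a single $\psi$---is not where the work lies; the openness of $\{T(\cdot,A)>0\}$ plus $\sigma$-finiteness already does everything.
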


This lemma can be proved in the exactly same way as Proposition 3.4 in \cite{TuomTwee1979} for a discrete-time Markov process. This is because the $K$-chains play the exactly same role in both cases. The essence here is that $\{x \in S; T(x,A) > 0\}$ is an open set of $S$ because $T(x,A)$ is lower semi-continuous in $x \in S$ for each $A \in \sr{B}(S)$.

The next fact is known as the ratio limit (or ergodic) theorem.

\begin{proposition}[Remark 1 for Theorems II.1 and II.2 of \cite{AzemKaplRevu1967}]                                                                                                                                                                                                                                                                                                                                                                                                                                                                                                                                                                                                                                                                                                                                                                                                                                                                                                                                                                                                                                                                                                                                                                                                                                                                                                                                                                                                                                                                                                                                                                                                                                                       
\label{pro:ergodic 1}
Assume that the Markov process $X(\cdot)$ is Harris recurrent, and denote its $\sigma$-finite invariant measure on $S$ by $\nu$. Let $L^{1}_{+}(\nu)$ be the set of all nonnegative measurable functions $f$ on $S$ such that $\br{\nu,f} \equiv \int_{S} f(y) \nu(dy) < \infty$, then, for $f, g \in L^{1}_{+}(\nu)$ satisfying $\br{\nu,g} > 0$,
\begin{align}
\label{eq:ergodic 1}
  \lim_{t \to \infty} \frac {\int_{0}^{t} f(X(u)) du}{\int_{0}^{t} g(X(u)) du} = \frac {\br{\nu,f}} {\br{\nu,g}}, \qquad a.s. \; \dd{P}_{x}, \forall x \in S.
\end{align}
\end{proposition}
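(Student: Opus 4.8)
The plan is to reduce the continuous-time ratio limit to a strong law of large numbers over regeneration cycles. First I would invoke the splitting/regeneration theory for Harris recurrent Markov processes: since $X(\cdot)$ is Harris recurrent, an associated resolvent chain admits a Nummelin-type splitting, producing almost surely finite regeneration epochs $0 \le T_0 < T_1 < T_2 < \cdots$ with $T_n \to \infty$, such that the path segments $\{X(t) : T_{n-1} \le t < T_n\}$ are i.i.d. for $n \ge 1$, only the initial segment on $[0,T_0)$ depending on the law of $X(0)$. Writing the cycle integrals
\begin{align*}
  Y_n^{f} = \int_{T_{n-1}}^{T_n} f(X(u))\,du, \qquad Y_n^{g} = \int_{T_{n-1}}^{T_n} g(X(u))\,du,
\end{align*}
the identification of the invariant measure with the expected occupation measure over one cycle furnishes a single constant $c > 0$ such that $\dd{E}(Y_n^{f}) = c \br{\nu, f}$ and $\dd{E}(Y_n^{g}) = c \br{\nu, g}$ for every $n \ge 1$. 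Since $f, g \in L^{1}_{+}(\nu)$ these means are finite, and $\br{\nu, g} > 0$ gives $\dd{E}(Y_1^{g}) > 0$.

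Next I would sandwich the additive functionals between consecutive cycle sums. Setting $N(t) = \max\{n : T_n \le t\}$, the nonnegativity of $f$ and the inclusion of complete cycles in $[0,t]$ yield
\begin{align*}
  \sum_{n=1}^{N(t)} Y_n^{f} \;\le\; \int_{0}^{t} f(X(u))\,du \;\le\; R^{f} + \sum_{n=1}^{N(t)+1} Y_n^{f},
\end{align*}
where $R^{f} = \int_{0}^{T_0} f(X(u))\,du$ is an almost surely finite remainder, with the analogous bound for $g$. Harris recurrence forces $N(t) \to \infty$ a.s. Applying the strong law of large numbers to the i.i.d. sequences $\{Y_n^{f}\}_{n \ge 1}$ and $\{Y_n^{g}\}_{n \ge 1}$ gives $N(t)^{-1}\sum_{n=1}^{N(t)} Y_n^{f} \to c\br{\nu,f}$ and likewise for $g$, while the end corrections vanish because a finite mean forces $Y_{N(t)+1}^{f}/N(t) \to 0$ and $R^{f}/N(t) \to 0$. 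Dividing the numerator and denominator bounds, the constant $c$ cancels and both the resulting upper and lower ratios converge to $\br{\nu,f}/\br{\nu,g}$, which is \eq{ergodic 1}. The limit is independent of the starting point because all cycles are identically distributed irrespective of $X(0)$, so the conclusion holds $\dd{P}_{x}$-a.s. for every $x \in S$.

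The main obstacle is the regeneration construction itself in the continuous-time, merely $\sigma$-finite setting. Because $\nu$ need not be finite here, one cannot normalize it to a stationary probability and invoke an ordinary ergodic theorem for the flow; the ratio form is essential precisely because it cancels the unknown, possibly infinite, total mass of $\nu$. Making the splitting rigorous requires passing to a discrete skeleton or resolvent chain, checking that the induced minorization produces genuine regeneration epochs for the continuous-time path, and verifying the occupation-measure identity $\dd{E}(Y_1^{f}) = c\br{\nu,f}$ together with the uniqueness of the invariant measure guaranteed by \dfn{Harris 1}(b). Once that machinery is in place the remaining renewal-reward and end-effect estimates are routine; this is exactly the content of Theorems II.1 and II.2 of \cite{AzemKaplRevu1967}, so in the paper we simply invoke their result.
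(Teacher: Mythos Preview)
Your conclusion is correct: the paper does not prove this proposition at all; it is quoted directly from \cite{AzemKaplRevu1967} and used as a black box, exactly as you say in your last sentence.

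One historical remark on your sketch: the regeneration/Nummelin-splitting route you outline is a perfectly sound modern way to derive the ratio ergodic theorem, but it is not what the 1967 paper does, since Nummelin's splitting technique was developed more than a decade later. Azema, Kaplan-Duflo and Revuz work instead with additive functionals and a Chacon--Ornstein--type ratio ergodic argument adapted to Harris processes. Your approach buys a concrete i.i.d.\ cycle structure and a transparent SLLN argument; theirs avoids having to construct regeneration times altogether and handles the $\sigma$-finite case directly via operator-theoretic ergodic theory. Either way, for the purposes of this paper nothing beyond the citation is needed.
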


\begin{remark}\rm
\label{rem:ergodic 2}
(i) If $X(\cdot)$ is positive recurrent, then $\nu(S) < \infty$, so, for any compact set $C \subset S$, $\nu(C) \le \nu(S) < \infty$. Hence, normalizing $\nu$ as $\nu(S)=1$, the dominated convergence theorem and \eq{ergodic 1} with $f(x) = 1(x \in C)$ and $g(x) = 1$ for $x \in S$ yield
\begin{align}
\label{eq:ergodic-p 1}
  \lim_{t \to \infty} \frac {1}{t} \int_{0}^{t} \dd{P}_{x}(X(u) \in C) du = \dd{E}_{x}\left(\lim_{t \to \infty} \frac {1}{t} \int_{0}^{t} 1(X(u) \in C) du\right) = \nu(C).
\end{align}
(ii) If $X(\cdot)$ is null recurrent, then $\nu(S) = \infty$. Since $\nu$ is $\sigma$-finite, there is a sequence of sets $\{S_{i} \in \sr{B}(S); i \ge 1\}$ such that $\nu(S_{i}) < \infty$ for each $i \ge 1$, $S_{i} \uparrow S$ and $\nu(S_{i}) \to \infty$ as $i \to \infty$. Since, for any compact set $C \subset S$, $\nu(C) < \infty$ by \lem{compact 1}, we have, by \eq{ergodic 1}, under $\dd{P}_{x}$,
\begin{align*}
   \limsup_{t \to \infty} & \frac {1}{t} \int_{0}^{t} 1(X(u) \in C) du \le \lim_{t \to \infty} \frac {\int_{0}^{t} 1(X(u) \in C) du} {\int_{0}^{t} 1(X(u) \in S_{i})du} = \frac {\nu(C)} {\nu(S_{i})} \to 0,\mbox{ as } i \to \infty.
\end{align*}
This implies that $\lim_{t \to \infty} \frac {1}{t} \int_{0}^{t} 1(X(u) \in C) du = 0$ $a.s.$ $\dd{P}_{x}$. Hence, by the dominated convergence theorem, $\lim_{t \to \infty} \dd{E}_{x}\left(\frac {1}{t} \int_{0}^{t} 1(X(u) \in C) du\right) = 0$. Since this limit can be replaced by limit supremum,
\begin{align}
\label{eq:ergodic-n 1}   \limsup_{t \to \infty} & \frac {1}{t} \int_{0}^{t} \dd{P}_{x}(X(u) \in C) du = \limsup_{t \to \infty} \dd{E}_{x}\left(\frac {1}{t} \int_{0}^{t} 1(X(u) \in C) du\right) = 0.
\end{align}
\end{remark}

The following proposition is obtained in \cite{MeynTwee1993a}, which will be a key for our approach.

\begin{proposition}[(ii) of Theorem 3.2 and the second half of (iv) of Theorem 3.4 of \cite{MeynTwee1993a}]                                                                                                                                                                                                                                                                                                                                                                                                                                                                                                                                                                                                                                                                                                                                                                                                                                                                                                                                                                                                                                                                                                                                                                                                                                                                                                                                                                                                                                                                                                                                                                                                                                               
\label{pro:positive 1}
Assume that the Markov process $X(\cdot)$ is a $T$-process and Harris irreducible, then the following three conditions are equivalent.
\begin{itemize}
\item [(a)] $X(\cdot)$ is positive Harris recurrent.
\item [(b)] For any $x \in S$ and any $\varepsilon > 0$, there is a compact set $C \subset S$ such that
\begin{align}
\label{eq:tight 1}
  \liminf_{t \to \infty} \frac 1t \int_{0}^{t} \dd{P}_{x}(X(u) \in C) du \ge 1 - \varepsilon. 
\end{align}
\item [(c)] For some 
$ x \in S$ and some compact set  $C \subset S$, 
\begin{align}
\label{eq:positive 1}
  \limsup_{t \to \infty} \frac 1t \int_{0}^{t} \dd{P}_{x}(X(u) \in C) du > 0. 
\end{align}
\end{itemize}
\end{proposition}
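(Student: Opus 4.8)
The plan is to establish the cycle of implications $(a) \Rightarrow (b) \Rightarrow (c) \Rightarrow (a)$, so that all three become equivalent. The two forward steps are essentially bookkeeping on top of the ratio limit theorem \pro{ergodic 1} and its consequences recorded in \rem{ergodic 2}, while the closing step $(c)\Rightarrow(a)$ is where the real content sits.

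For $(a)\Rightarrow(b)$, suppose $X(\cdot)$ is positive Harris recurrent and normalize its invariant measure $\nu$ so that $\nu(S)=1$. By \eq{ergodic-p 1} of \rem{ergodic 2}(i), for every $x \in S$ and every compact $C \subset S$,
\[
  \lim_{t\to\infty}\frac1t\int_0^t \dd{P}_x(X(u)\in C)\,du = \nu(C).
\]
Since $S$ is separable, locally compact and metric, it is $\sigma$-compact, say $S = \bigcup_{n\ge1} C_n$ with $C_n$ compact and $C_n \uparrow S$; continuity of $\nu$ from below then gives $\nu(C_n)\uparrow\nu(S)=1$. Given $\varepsilon>0$, choose $n$ with $\nu(C_n)\ge 1-\varepsilon$ and put $C=C_n$; the displayed limit equals $\nu(C)\ge 1-\varepsilon$, so \eq{tight 1} holds, which is $(b)$. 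The step $(b)\Rightarrow(c)$ is immediate: applying $(b)$ with any fixed $x$ and $\varepsilon=\tfrac12$ yields a compact $C$ whose $\liminf$, hence a fortiori its $\limsup$, exceeds $0$, which is exactly \eq{positive 1}.

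The substance lies in $(c)\Rightarrow(a)$, which I would argue by contraposition through the recurrence/transience dichotomy for Harris irreducible $T$-processes. Because $X(\cdot)$ is a $T$-process, compact sets are petite, and consequently $X(\cdot)$ is either transient or Harris recurrent. If $X(\cdot)$ were transient, then every compact $C$ would be uniformly transient, so $\sup_{x}\dd{E}_x(\eta_C)\le M<\infty$ for a finite constant $M$, whence
\[
  \frac1t\int_0^t \dd{P}_x(X(u)\in C)\,du \le \frac1t\,\dd{E}_x(\eta_C) \le \frac{M}{t}\to 0
\]
for every $x$, contradicting \eq{positive 1}. Thus $X(\cdot)$ is Harris recurrent. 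It remains only to exclude null recurrence: if $\nu(S)=\infty$, then \eq{ergodic-n 1} of \rem{ergodic 2}(ii) forces $\limsup_{t\to\infty}\frac1t\int_0^t\dd{P}_x(X(u)\in C)\,du=0$ for every compact $C$ and every $x$, again contradicting $(c)$. Hence $\nu(S)<\infty$ and $X(\cdot)$ is positive Harris recurrent, which is $(a)$.

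I expect the transience step to be the main obstacle, namely converting ``$X(\cdot)$ is a transient $T$-process'' into the uniform bound $\sup_x\dd{E}_x(\eta_C)<\infty$ on compact $C$. This is exactly where the $T$-process hypothesis is indispensable: the lower semicontinuity of the kernel $T$ makes $\{x: T(x,A)>0\}$ open (the observation used just after \lem{compact 1}), which is what renders compact sets petite and, in the transient regime, uniformly transient. This structural fact is the content of the cited results of \citet{MeynTwee1993a} and is the only place where one must go beyond the elementary ergodic-ratio arguments of \pro{ergodic 1} and \rem{ergodic 2}; everything else in the equivalence reduces to the $\sigma$-compactness of $S$ and the two regimes of \rem{ergodic 2}.
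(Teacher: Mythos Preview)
Your argument is correct and follows essentially the same route as the paper's proof in \app{positive 1}: both use the recurrence/transience dichotomy for Harris irreducible $T$-processes (the paper via the Doeblin decomposition of \cite{TuomTwee1979a}), eliminate transience by showing \eq{positive 1} forces $\dd{E}_x(\eta_C)=\infty$, and then eliminate null recurrence via \eq{ergodic-n 1}. The only minor differences are that the paper proves only $(a)\Leftrightarrow(c)$ explicitly, deferring $(a)\Leftrightarrow(b)$ to \cite{MeynTwee1993a}, and that you are more explicit about \emph{why} transience would give $\dd{E}_x(\eta_C)<\infty$ on compacts (through petiteness), a point the paper leaves implicit in its appeal to $\sigma$-transience.
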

\begin{remark}\rm
\label{rem:positive 1}
The condition (b) is called bounded in probability in \cite{MeynTwee1993a}, but we call it tight on average.
\end{remark}

This proposition is a subset of Theorems 3.2 and 3.4 of \cite{MeynTwee1993a}, and follows from \pro{ergodic 1}. However, the equivalence of (a) and (c) is not fully proved in \cite{MeynTwee1993a}. We prove this equivalence in \app{positive 1} because it is particularly important for this research.

\section{Two-station cascade system}
\label{sec:two station}
We focus on the two-station cascade system. This is a queueing system with single-server stations 1, 2  and infinite-buffers. The servers are working in parallel, and, for $i=1,2$, exogenous customers arrive at station $i$, and are called class-$i$ customers. Let $\{t_{e,i}(n); n \ge 1\}$ be the arriving times of class-$i$ customers, which are
assumed to be a renewal process with $i.i.d.$ interarrival times $\tau_{e,i}(n) = t_{e,i}(n) - t_{e,i}(n-1) > 0$, $n \ge 1$, where $t_{e,i}(0) = 0$. Denote the generic element of $\tau_{e,i}(n)$ by $\tau_{e,i}$, and the arrival rate by
\begin{align*}
  \lambda_{i} \equiv 1/\dd{E}(\tau_{e,i})\in (0,\,\infty),\qquad i = 1,2.
\end{align*}
This convention $\tau_{e,i}$ for $\tau_{e,i}(n)$ will be used for other random variables indexed by $n$. 
 Class-$i$ customers are served in the first-come-first-served manner, and their service times $\{\tau_{s,i}(n), n \ge 1\}$ are $i.i.d.$, and the service rate is denoted by $\mu_{i} \equiv 1/\dd{E}(\tau_{s,i}) > 0$ for $i = 1,2$. 

Let $Q_{1}(t)$ be the number of class-$1$ customers at station $1$ including customers being served at time $t$. There is a switching rule for a class-$1$ customer in such a way that, if a class-$1$ customer who is waiting for service or just arrives from the outside observes queue size including itself to be greater than $c_{1} \ge 1$ (a given threshold) and if station $2$ is empty, then it immediately goes to station $2$ as a class-${1|2}$ customer, and gets service immediately with $i.i.d.$ service times $\{\tau_{s,1|2}(n); n \ge 1\}$ with rate $\mu_{{1|2}} \equiv 1/\dd{E}(\tau_{s,{1|2}})$. We assume  that class-${1|2}$ customer is always preempted in service by class-$2$ customers and resumes its service when no class $2$ customer is in station 2. Thus, there is at most one class-${1|2}$ customer in station $2$ who can get service only when there is no class-$2$ customer. All customers who completed their service leave the system.

To consider the stability, we describe the 2-station cascade system by a Markov process. For $v =1,2,{1|2}$, let $Q_{v}(t)$ be the number of class-$v$ customers in the system at time $t$, which is called a queue $v$ at time $t$. Define $L_{i}(t)$ for $i=1,2$ as
\begin{align*}
  L_{1}(t) = Q_{1}(t), \qquad L_{2}(t) = Q_{2}(t) + Q_{{1|2}}(t), \qquad t \ge 0.
\end{align*}
Thus, $L_{i}(t)$ is the total number of customers in station $i$ at time $t$ for $i=1,2$. For time $t \ge 0$, let $R_{e,i}(t)$ and $R_{s,i}(t)$ be the remaining arrival and service times, respectively, of class-$i$ customers for $i=1,2$, and let $R_{s,{1|2}}(t)$ be the remaining service time of a class-${1|2}$ customer, where the remaining service times vanish if there is no customer of the corresponding class in the system. Let $S = \dd{Z}_{+}^{2} \times \{0,1\} \times \dd{R}_{+}^{5}$, which can be considered as a separable complete metric space, where the discrete topology is taken on $\dd{Z}_{+}^{2} \times \{0,1\}$. Define $\vc{X}(\cdot) = \{X(t); t \ge 0\}$ as
\begin{align}
\label{eq:X 1}
 & \vc{X}(t) = (\vc{Q}(t), \vc{R}_{e}(t), \vc{R}_{s}(t), R_{s,{1|2}}(t)) \in S,
\end{align}
where $\vc{Q}(t) = (Q_{1}(t),Q_{2}(t),Q_{1|2}(t))$, $\vc{R}_{e}(t) = (R_{e,1}(t),R_{e,2}(t))$ and $\vc{R}_{s}(t) = (R_{s,1}(t),R_{s,2}(t))$. We take the natural filtration $\dd{F} \equiv \{\sr{F}_{t}; t \ge 0\}$ for $\vc{X}(\cdot)$. Namely, $\sr{F}_{t}$ is defined as
\begin{align*}
  \sr{F}_{t} = \sigma(\{\vc{X}(u); u \in [0,t]\}), \qquad t \ge 0,
\end{align*}
where $\sigma(\cdot)$ is the minimal $\sigma$-field including all events in $\sr{F}$ generated by ``$\cdot$''. Then, clearly $\vc{X}(\cdot)$ is a continuous-time Markov process with respect to $\dd{F}$. Since its sample paths are piecewise deterministic, $\vc{X}(\cdot)$ is a strong Markov processes with respect to $\dd{F}$ (e.g., see \cite{Davi1993}). This process fully describes the 2-station cascade system, and we refer to it as a {\it 2-station cascade process}. Similarly, we define
\begin{align}
\label{eq:X2 1}
  \vc{X}_{2}(\cdot) = \{(Q_{2}(t),R_{e,2}(t), R_{s,2}(t); t \ge 0\},
\end{align}
which is a strong Markov process describing the queue of class-$2$ customers.

For the Markov process $\vc{X}(\cdot)$, we apply the framework introduced in \sectn{Markov}. The following assumption enables $\vc{X}(\cdot)$ to be a $T$-process and Harris irreducible.

\begin{assumption}
\label{ass:spread 1}
For $i=1,2$, (a) $\dd{P}(\tau_{e,i}>x) > 0$ for all $x > 0$, and (b) there is a function $p_{i}(x) \ge 0$ such that $\int_{0}^{\infty} p_{i}(x) dx > 0$ and, for some integer $j_{i} \ge 1$,
\begin{align*}
  \dd{P}\Big(a \le \sum_{n=1}^{j_{i} } \tau_{e,i}(n) \le b\Big) \ge \int_{a}^{b} p_{i}(x) dx, \qquad \forall a, b \mbox{ satisfying } \; 0 \le a < b.
\end{align*}
\end{assumption}

The following lemmas are easy consequences of this assumption.

\begin{lemma}\rm
\label{lem:Harris 1}
Under \ass{spread 1}, (i) the Markov process $\vc{X}(\cdot)$ is a $T$-process, and any $\vc{x}^{*} \in \{(0,0,0,\vc{r}_{e},0,0,0) \in S; \vc{r}_{e} \in \dd{R}_{+}^{2}\}$ is reachable from any $\vc{x} \in S$, that is, $\int_{0}^{\infty }\dd{P}_{\vc{x}}(\vc{X}(u) \in G ) du > 0$ for any $\vc{x} \in S$ and any open set $G$ containing $\vc{x}^{*}$,\\
(ii) $\vc{X}(\cdot)$ is $\varphi$-irreducible for $\varphi(B) \equiv T(\vc{x}^{*},B)$ for $B \in \sr{B}(S)$.
\end{lemma}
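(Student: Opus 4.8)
The plan is to establish both claims from a single construction: I will exhibit a positive-probability ``drain-and-reset'' event that drives the process from an arbitrary state into a neighborhood of an empty configuration while smearing the arrival-clock coordinates via the spread-out part of \ass{spread 1}(b). This yields simultaneously a non-trivial, lower semi-continuous minorizing kernel $T$ (giving the $T$-process property of \dfn{T-process 1}) and the reachability of the empty states; the $\varphi$-irreducibility in (ii) then follows by a standard lower-semicontinuity argument.

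First I would show that the system can be emptied with positive probability in finite time. Starting from any $\vc{x}=(\vc{q},\vc{r}_{e},\vc{r}_{s},r_{s,{1|2}})$ the total number of customers present is finite, and since each service time is a.s. finite it is bounded by some $M$ with positive probability, while by \ass{spread 1}(a) each interarrival time exceeds any prescribed bound with positive probability. Hence there is a finite window and a positive-probability event on which only finitely many exogenous arrivals occur and all present and arriving class-$1$, class-$2$ and class-${1|2}$ customers complete service, so that $\vc{X}$ reaches the configuration $(0,0,0,\cdot,\cdot,0,0,0)$. The switching and preemption rule only reassigns which server processes a customer; on the event that no further arrivals occur it cannot prevent draining, since every class-$1$, class-$2$ and class-${1|2}$ customer eventually departs. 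Next, once empty, I arrange a controlled sequence of exactly $j_{i}$ class-$i$ arrivals (each promptly served so the queues return to empty), after which no further arrival occurs for a while. By \ass{spread 1}(b) the epoch of the $j_{i}$-th class-$i$ arrival carries an absolutely continuous component with density bounded below by $p_{i}$, and because the coordinate $R_{e,i}$ subsequently evolves by the deterministic downward shift, the pushforward of $p_{i}$ through this (translation) flow gives a density lower bound for $R_{e,i}$ on an interval, independently for $i=1,2$. Sampling at times governed by a distribution $a$ on $\dd{R}_{+}$ supported on the interval over which the empty configuration persists, I obtain $K_{a}(\vc{x},\cdot)\ge T(\vc{x},\cdot)$ with $T(\vc{x},A)=g(\vc{x})\,\nu_{0}(A)$, where $\nu_{0}$ is the measure carrying this joint arrival-clock density on empty states and $g(\vc{x})>0$ is the probability of the described event.

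The kernel $T$ is non-trivial because $g>0$ and $\nu_{0}(S)>0$. For lower semicontinuity in $\vc{x}$, I note that the discrete queue coordinates are locally constant, so for $\vc{y}$ near $\vc{x}$ only the remaining-time coordinates are perturbed slightly; the draining-and-resetting event is defined by a finite collection of strict (open) inequalities on service and interarrival times, hence remains available under such perturbations and $g$ varies lower-semicontinuously. This proves $\vc{X}(\cdot)$ is a $T$-process. The reachability assertion of (i) is immediate from the same construction: choosing the sampling time so that the arrival clocks sit near a prescribed $\vc{r}_{e}^{*}$ places $\vc{X}$ in any given open $G\ni\vc{x}^{*}=(0,0,0,\vc{r}_{e}^{*},0,0,0)$, and since the empty configuration persists over a time interval of positive length, $\int_{0}^{\infty}\dd{P}_{\vc{x}}(\vc{X}(u)\in G)\,du>0$.

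For (ii), set $\varphi(B)=T(\vc{x}^{*},B)$. If $\varphi(B)>0$ then $\nu_{0}(B)>0$, so by lower semicontinuity $T(\vc{y},B)>0$ for all $\vc{y}$ in some open $G\ni\vc{x}^{*}$; hence $K_{a}(\vc{y},B)>0$ and $\dd{E}_{\vc{y}}(\eta_{B})>0$ there. Combining this with the reachability of $G$ from an arbitrary $\vc{x}$ established in (i) and the (strong) Markov property yields $\dd{E}_{\vc{x}}(\eta_{B})>0$ for all $\vc{x}\in S$, which is $\varphi$-irreducibility in the sense of \dfn{Harris 1}(a). I expect the main obstacle to be the density bookkeeping in the second paragraph: verifying that the spread-out density of \ass{spread 1}(b) genuinely survives as a joint density in the $(R_{e,1},R_{e,2})$ coordinates after passing through the deterministic flow and the switching dynamics, and that both arrival streams can be reset simultaneously while the queues are returned to empty. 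The draining argument and the lower-semicontinuity verification are comparatively routine.
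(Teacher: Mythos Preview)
Your proposal is correct and follows essentially the paper's route: the paper defers (i) to Lemma~3.7 of Meyn--Down (1994), whose drain-then-reset construction you have outlined, and your argument for (ii) coincides with the paper's Appendix proof (lower semicontinuity of $T(\cdot,B)$ gives an open neighborhood of $\vc{x}^{*}$ on which $T(\cdot,B)>0$, reachability of that neighborhood from any $\vc{x}$ is supplied by (i), and the Markov property via a convolved sampling kernel finishes).

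One technical point in your sketch of (i) deserves tightening. The sampling distribution $a$ in \dfn{T-process 1} must be a single fixed measure, not depend on the initial state $\vc{x}$, so taking $a$ ``supported on the interval over which the empty configuration persists'' does not literally work, since the draining time varies with $\vc{x}$. The standard remedy (used in Meyn--Down, and implicit in the paper's Appendix where the exponential kernel $K_{e}$ is convolved with $K_{a}$) is to take $a$ absolutely continuous on all of $\dd{R}_{+}$, e.g.\ exponential, so that the $\vc{x}$-dependent delay until the reset window is reached is absorbed into the factor $g(\vc{x})$ while $a$ itself stays fixed. With that adjustment your product form $T(\vc{x},A)=g(\vc{x})\,\nu_{0}(A)$ is legitimate and the lower-semicontinuity check reduces to that of $g$, as you say.
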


The part (i) of this lemma is similarly proved as Lemma 3.7 of \cite{MeynDown1994}, while (ii) is the continuous-time counterpart of Proposition 6.2.1 of \cite{MeynTwee2009}, and can be similarly proved. Nevertheless, we prove (ii) in \app{Harris-ii} for the completeness.

\begin{lemma}\rm
\label{lem:positive 2}
Under \ass{spread 1}, the Markov process $\vc{X}(\cdot)$ is positive Harris recurrent if and only if, for each $i=1,2$, there exist $\vc{x}_{i} \in S$ and $\ell_{i} \ge 0$ such that
\begin{align}
\label{eq:positive 2}
  \limsup_{t \to \infty} \frac 1t \int_{0}^{t} \dd{P}_{\vc{x}_{i}}(Q_{i}(u) \le \ell_{i}) du > 0, \qquad i=1,2.
\end{align}
\end{lemma}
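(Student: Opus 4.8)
\textbf{Proof proposal for \lem{positive 2}.}

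The plan is to apply \pro{positive 1} to the full process $\vc{X}(\cdot)$, which is legitimate because \lem{Harris 1} guarantees that $\vc{X}(\cdot)$ is a $T$-process and Harris irreducible under \ass{spread 1}. The key is to translate the abstract tightness-on-average condition (b), which involves compact sets $C \subset S = \dd{Z}_{+}^{2} \times \{0,1\} \times \dd{R}_{+}^{5}$, into the concrete queue-length condition \eq{positive 2} on the two components $Q_{1}$ and $Q_{2}$. The main point I would exploit is that, once the integer-valued queue sizes $Q_{1}, Q_{2}, Q_{1|2}$ are bounded and the $\{0,1\}$-coordinate is automatically finite, the only unbounded directions left are the five remaining-time coordinates in $\dd{R}_{+}^{5}$; the regularity in \ass{spread 1}(a) forces those remaining-interarrival-time coordinates to be tight, so boundedness of the queues essentially implies membership in a compact set up to arbitrarily small error.

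First I would prove the ``if'' direction. Suppose \eq{positive 2} holds. For the sufficiency one really only needs the weaker condition (c) of \pro{positive 1}, so I would build a single compact set $C$ from the two bounds $\ell_1, \ell_2$ and show $\limsup_t \frac1t\int_0^t \dd{P}_{\vc{x}}(\vc{X}(u)\in C)\,du>0$ for a suitable starting point. Concretely, I would take a compact set of the form
\begin{align*}
  C = \{\vc{x}\in S : Q_1 \le \ell_1,\ Q_2 \le \ell_2,\ Q_{1|2}\le m,\ \vc{R}_e, \vc{R}_s, R_{s,1|2} \le K\}
\end{align*}
for large constants $m, K$. The queue-size bounds $Q_1\le\ell_1$ and $Q_2\le\ell_2$ give positive time-average occupation by hypothesis; the auxiliary coordinate $Q_{1|2}$ is bounded once $Q_1, Q_2$ are (a class-$1|2$ customer exists only under the switching rule, and there is at most one of them, so in fact $Q_{1|2}\le 1$), and the remaining-time coordinates are made tight by choosing $K$ large, using \ass{spread 1}(a) to control the residual interarrival times and the finiteness of mean service times for the residual service times. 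Since bounding finitely many queues up to a small loss of occupation measure, combined with tightness of the residual-time coordinates, keeps the time-average occupation of $C$ bounded below by a positive constant, condition (c) is met and positive Harris recurrence follows.

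For the ``only if'' direction I would argue the contrapositive using condition (b): if $\vc{X}(\cdot)$ is positive Harris recurrent, then for every $\varepsilon>0$ there is a compact $C$ with time-average occupation at least $1-\varepsilon$; projecting $C$ onto the $Q_i$-coordinate, compactness in the discrete topology on $\dd{Z}_{+}$ forces a finite bound $\ell_i$ on $Q_i$ inside $C$, whence \eq{positive 2} holds for that $i$ with any starting point $\vc{x}_i$. The one subtlety to be careful about is that \eq{positive 2} asks for the bounds on $Q_1$ and $Q_2$ \emph{separately} (with possibly different starting states $\vc{x}_1, \vc{x}_2$), which is weaker than a single joint compact set; the separate, component-wise formulation is exactly what makes it usable in \sectn{system}, where the two stations are analyzed one at a time. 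I expect the main obstacle to be the tightness of the residual-time coordinates in the ``if'' direction: one must verify that restricting the five $\dd{R}_+$-components to $[0,K]$ costs only $o(1)$ of the occupation measure uniformly in the relevant time window, which is where \ass{spread 1}(a) and the finiteness of $\dd{E}(\tau_{e,i}), \dd{E}(\tau_{s,i})$ genuinely enter rather than being cosmetic. Everything else is a routine unpacking of the compactness in \pro{positive 1}.
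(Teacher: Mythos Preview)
Your overall plan to invoke \pro{positive 1} via \lem{Harris 1} is correct, and the ``only if'' direction is fine. The genuine gap is in the ``if'' direction, and it is precisely the ``subtlety'' you flag but then set aside: you propose to build a single compact $C$ in $S$ requiring \emph{both} $Q_{1}\le\ell_{1}$ and $Q_{2}\le\ell_{2}$, but the hypothesis \eq{positive 2} only gives you positive $\limsup$ time-average for each inequality \emph{separately}, possibly at different starting states and along different time subsequences. From
\[
  \dd{P}_{\vc{x}}(Q_{1}(u)\le\ell_{1},\,Q_{2}(u)\le\ell_{2}) \ge \dd{P}_{\vc{x}}(Q_{1}(u)\le\ell_{1}) - \dd{P}_{\vc{x}}(Q_{2}(u)>\ell_{2}),
\]
you see that to make the left side positive on time average you would need the \emph{liminf} of $\frac1t\int_0^t\dd{P}_{\vc{x}}(Q_{2}(u)\le\ell_{2})\,du$ to be close to $1$, not merely a positive limsup. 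Nothing in your argument supplies that upgrade. This is not a residual-time issue; tightness of the five $\dd{R}_{+}$-coordinates is, as you say, routine from regenerative/domination arguments and costs only an arbitrarily small $\varepsilon$. The hard step is joining the two queue conditions.

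The paper closes this gap by exploiting the structural fact that $\vc{X}_{2}(\cdot)=(Q_{2},R_{e,2},R_{s,2})$ is itself a Harris irreducible $T$-process (station~2's class-$2$ queue does not see station~1). The condition \eq{positive 2} for $i=2$, combined with tightness-on-average of $R_{e,2},R_{s,2}$, gives condition~(c) of \pro{positive 1} for the sub-process $\vc{X}_{2}(\cdot)$, hence positive Harris recurrence of $\vc{X}_{2}(\cdot)$. By (b) of \pro{positive 1} this upgrades the $i=2$ information from ``some positive limsup'' to ``liminf $\ge 1-\varepsilon$ for every $\varepsilon$'' for a suitable compact set in the station-2 coordinates. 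Only then can one subtract an arbitrarily small $\varepsilon$ from the positive limsup in the $i=1$ condition (plus the tight residual coordinates for station~1) to obtain condition~(c) for the full $\vc{X}(\cdot)$. Without this intermediate positive-recurrence step for $\vc{X}_{2}(\cdot)$, the two separate conditions cannot be merged into a single compact-set occupation bound.
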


This lemma is proved in \app{positive 2}. We are ready to answer to the stability of the 2-station cascade system.

\begin{theorem}\rm
\label{thr:stability 1}
Under \ass{spread 1}, the Markov process $\vc{X}(\cdot)$ 
is positive recurrent if and only if
\begin{align}
\label{eq:stability 1}
 & \widetilde{\rho}_{1} \equiv \frac {\lambda_{1}} {\mu_{1} + \mu_{{1|2}}(1-\rho_{2})} < 1,\\
\label{eq:stability 2}
 & \rho_{2} < 1.
\end{align}
\end{theorem}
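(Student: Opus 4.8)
The plan is to invoke \lem{positive 2}, which reduces positive Harris recurrence of $\vc{X}(\cdot)$ to the two separate ``tight on average'' conditions \eq{positive 2}, one for $Q_1$ and one for $Q_2$, and then to match each of these with one of \eq{stability 1}, \eq{stability 2}. Everything hinges on one structural fact: since class-$2$ customers preempt the class-${1|2}$ customer, the marginal process $\vc{X}_2(\cdot)$ of \eq{X2 1} evolves exactly as a $GI/GI/1$ queue, unaffected by $Q_1$ and $Q_{1|2}$. Classical theory then gives that $Q_2$ is tight (equivalently $\vc{X}_2(\cdot)$ is positive recurrent) iff $\rho_2<1$, which is \eq{stability 2}; and when $\rho_2<1$ the ergodic theorem \pro{ergodic 1} applied to $\vc{X}_2(\cdot)$ yields the limit $\frac1t\int_0^t 1(Q_2(u)=0)\,du \to 1-\rho_2$ a.s. This disposes of station $2$ and of the $i=2$ part of \eq{positive 2}.

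The substance of the theorem is the $i=1$ part, whose core is the effective departure rate of station $1$ while it is saturated, i.e.\ $Q_1>c_1$. Departures from $Q_1$ are of two kinds: direct completions, at rate $\mu_1$ while station $1$ is busy, and switches to station $2$. While $Q_1>c_1$ the class-${1|2}$ slot, once filled, stays occupied---a completed class-${1|2}$ customer is instantly replaced, because station $2$ empties only when $Q_2=Q_{1|2}=0$ and then a waiting class-$1$ customer switches at once---so this customer is served at rate $\mu_{1|2}$ during precisely the $Q_2=0$ periods. By the limit above the class-${1|2}$ throughput, and hence the switching rate, is $\mu_{1|2}(1-\rho_2)$, so the total departure rate from $Q_1$ in saturation is $\mu_1+\mu_{1|2}(1-\rho_2)$ and its net drift is $\lambda_1-(\mu_1+\mu_{1|2}(1-\rho_2))$, negative exactly when $\widetilde\rho_1<1$.

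For necessity I would argue by rate conservation under the stationary law. If $\vc{X}(\cdot)$ is positive recurrent with invariant probability $\nu$, then $\rho_2<1$ follows from station $2$ as above, and the ergodic theorem equates the external input rate of $Q_1$ with its output rate, $\lambda_1=\mu_1\,\dd{P}_\nu(Q_1\ge1)+\mu_{1|2}\,\dd{P}_\nu(Q_{1|2}=1,\,Q_2=0)$, the last term being simultaneously the switch rate and the class-${1|2}$ completion rate. Since $\dd{P}_\nu(Q_{1|2}=1,Q_2=0)\le\dd{P}_\nu(Q_2=0)=1-\rho_2$ and $\dd{P}_\nu(Q_1\ge1)<1$---strict because $\vc{x}^*$ with $\vc{Q}=0$ is reachable from every state by \lem{Harris 1}, so $\nu$ charges $\{Q_1=0\}$---this forces $\lambda_1<\mu_1+\mu_{1|2}(1-\rho_2)$, which is \eq{stability 1}. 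For sufficiency I would assume both \eq{stability 1}, \eq{stability 2} and verify \eq{positive 2} for $i=1$ by a fluid-type sample-path argument: started from $Q_1(0)=m$ large, the process stays saturated over an interval on which the negative drift of the previous paragraph applies, so $Q_1$ reaches a fixed bounded set in time of order $m$ and with positive long-run frequency, which yields the required bound; \lem{positive 2} then gives positive Harris recurrence.

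The step I expect to be hardest is making the saturated-throughput computation rigorous. The delicate point is not the ergodic fraction $1-\rho_2$ in isolation but the coupling over a long saturation interval: one must show the cumulative service given to class-${1|2}$ customers is $\mu_{1|2}(1-\rho_2)t+o(t)$ uniformly enough to control the drift, which needs the class-${1|2}$ server to be genuinely work-conserving on the $Q_2=0$ periods throughout the interval, with the transient near the threshold $c_1$ and the contributions at the interval endpoints shown to be negligible. A second point requiring care is the strict inequality in necessity, namely excluding the boundary case $\widetilde\rho_1=1$ through the support of $\nu$.
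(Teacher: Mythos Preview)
Your plan is sound and the necessity half is essentially the paper's argument, though you phrase it more directly: the paper writes the rate balance as $\mu_{1}\rho^{*}_{1}(\xi)=\lambda_{1}-\lambda_{1|2}\ge\lambda_{1}-\mu_{1|2}(1-\rho_{2})$ via Little's law and \lem{D12 3}, then does a short algebraic manipulation to deduce $\widetilde\rho_{1}<1$ from $\rho^{*}_{1}(\xi)<1$, whereas you bound the two terms in the rate identity separately. The strict inequality $\dd{P}_{\nu}(Q_{1}\ge 1)<1$ is obtained in the paper exactly as you suggest, from the reachability of $\vc{x}^{*}$ (packaged as \lem{stability 2}).

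The sufficiency half is where you and the paper diverge. You propose a direct drift argument: start from $Q_{1}(0)=m$ large, ride a long saturation interval, and show the net drift is negative. The paper instead argues by contraposition: it \emph{assumes} that \eq{positive 2} fails for $i=1$, i.e.\ $\limsup_{t}\frac{1}{t}\int_{0}^{t}\dd{P}_{\vc{x}}(Q_{1}(u)\le\ell)\,du=0$ for every $\vc{x}$ and $\ell$, and from this derives $\lambda_{1}\ge\mu_{1}+\mu_{1|2}(1-\rho_{2})$. The mechanism is that the assumed failure gives $\frac{1}{t}\int_{0}^{t}1(Q_{1}(u)\le c_{1})\,du\to 0$ in probability, hence a.s.\ along a subsequence $\{s_{n}\}$; along this subsequence $B_{1}(s_{n})/s_{n}\to 1$ and $J_{1|2}(s_{n})/s_{n}\to 1-\rho_{2}$, so SLLN applied to $N_{1}$ and $N_{1|2}$ gives $D_{1}(s_{n})/s_{n}\to\mu_{1}$ and $D_{1|2}(s_{n})/s_{n}\to\mu_{1|2}(1-\rho_{2})$, and the flow balance \eq{L1 1} finishes.

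What this buys is precisely the circumvention of the difficulty you flag as hardest. You worry about making the saturated-throughput computation uniform over a long interval whose length you do not control a priori; in the paper's setup the ``saturation'' comes for free from the hypothesis being contradicted, and the only analytic ingredients are Markov's inequality, the subsequence trick for convergence in probability, and ordinary SLLN. Your direct route would work but needs genuine additional estimates (an excursion or hitting-time bound to ensure the saturation interval is long enough for the ergodic average of $1(Q_{2}=0)$ to stabilise); the paper's contrapositive removes that burden entirely.
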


\begin{remark} \rm
\label{rem:stability 1}
\citet[Theorem 3]{MoroStey2013} show that the essentially same condition is sufficient for the stability under the assumption that station $1$ has $m_{1}$ $( \ge 1)$ servers and Poisson arrivals and the switching level $c_{1} = m_{1}$. The extension from a single server to multiple servers at stations is not hard in our arguments (see \sectn{proof}). Thus, the sufficiency of \eq{stability 1} and \eq{stability 2} is essentially known under the assumption that $\vc{X}(\cdot)$ is a regenerative process, but their necessity is only considered in some cases in \cite{MoroStey2013}. For the general $k$-station cascade system at least for the Poisson arrivals, \citet{DelgMoro2014} study its stability, assuming $c_{i} = 1$. They use a fluid approximation, and their sufficient conditions for $k=2$ agree with ours. They also derive the necessary conditions, which are different from the sufficient ones. Namely, there is a gap in those conditions even for $k=2$.
\end{remark}

One may interpret $\widetilde{\rho}_{1}$ of \eq{stability 1} as a kind of the traffic intensity of exogenously arriving customers at station 1 provided station 1 is saturated (permanently busy). Hence, it is different from the actual traffic intensity which agrees with the stationary probability that station 1 is not empty when the system is stable. Thus, it may be interesting to compare $\widetilde{\rho}_{1}$ with $\rho^{*}_{1}(\xi)$, which is intuitively introduced as the effective traffic intensity in \sectn{introduction}. We now formally define it for a distribution $\xi$ on $S$ for $i=1,2$ by
\begin{align}
\label{eq:rho* 1}
  \rho^{*}_{i}(\xi) = \liminf_{t \to \infty} \frac 1t \int_{0}^{t} \Big( \int_{S} \dd{P}_{\vc{x}}(Q_{i}(u) \ge 1) \xi(d\vc{x}) \Big) du, \end{align}
which is called an effective traffic intensity of $Q_{i}(\cdot)$. We note the following basic facts on $\rho^{*}_{i}(\xi)$. They may look intuitively obvious, but have the important message that the stability problem on $\vc{X}(\cdot)$ can be answered by the effective traffic intensities.
\begin{lemma}\rm
\label{lem:stability 2}
Under \ass{spread 1}, (i) if $\vc{X}(\cdot)$ is positive recurrent, then, for any distribution $\xi$ on $S$,
\begin{align}
\label{eq:rho-1 1}
  \rho_{i}^{*}(\xi) = 1 - \dd{P}_{\nu}(Q_{i}(0)=0), \qquad i=1,2,
\end{align}
where $\nu$ is the stationary distribution of $\vc{X}(\cdot)$, and $\dd{P}_{\nu}(A) = \int_{S} \dd{P}_{\vc{x}}(A) \nu(dx)$ for $A \in \sr{F}$. Hence, $\rho^{*}_{i}(\xi)$ is independent of $\xi$ when $\vc{X}(\cdot)$ is positive recurrent, (ii) $\vc{X}(\cdot)$ is positive recurrent if and only if $\rho^{*}_{i}(\xi) < 1$ for $i=1,2$ and some distribution $\xi$ on $S$.
\end{lemma}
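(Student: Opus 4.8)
My plan is to derive (i) from the ratio ergodic theorem \pro{ergodic 1}, and then obtain both directions of (ii) as consequences of (i): sufficiency through \lem{positive 2}, and necessity through the reachability statement in \lem{Harris 1}.

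For part (i), I would assume $\vc{X}(\cdot)$ is positive recurrent with invariant probability measure $\nu$ normalized so that $\nu(S)=1$. The function $f_i(\vc{x}) = 1(Q_i \ge 1)$ is bounded and hence lies in $L^1_+(\nu)$, and $g \equiv 1$ satisfies $\br{\nu,g}=1>0$. Applying \pro{ergodic 1} with this $f_i$ and $g$ gives $\frac 1t \int_0^t 1(Q_i(u) \ge 1)\,du \to \dd{P}_\nu(Q_i(0)\ge 1)$ almost surely under $\dd{P}_{\vc{x}}$ for every $\vc{x}$. Since these time averages are bounded by $1$, bounded convergence upgrades this to $\frac1t\int_0^t \dd{P}_{\vc{x}}(Q_i(u)\ge1)\,du \to \dd{P}_\nu(Q_i(0)\ge1)$ for each fixed $\vc{x}$. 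Interchanging the $du$ and $\xi(d\vc{x})$ integrations in \eq{rho* 1} by Tonelli and applying bounded convergence once more (the maps $\vc{x}\mapsto \frac1t\int_0^t \dd{P}_{\vc{x}}(Q_i(u)\ge1)\,du$ are bounded by $1$ and $\xi$ is a probability measure) shows that the average in \eq{rho* 1} converges to $\dd{P}_\nu(Q_i(0)\ge1)$, a constant independent of $\xi$. As the limit exists, the $\liminf$ equals it, which is exactly \eq{rho-1 1}.

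For the sufficiency in (ii), suppose $\rho_i^*(\xi) < 1$ for $i=1,2$ and some $\xi$. Writing $\dd{P}_{\vc{x}}(Q_i(u)=0) = 1 - \dd{P}_{\vc{x}}(Q_i(u)\ge1)$ turns the hypothesis into $\limsup_{t\to\infty} \int_S h^{(i)}_t(\vc{x})\,\xi(d\vc{x}) > 0$, where $h^{(i)}_t(\vc{x}) = \frac1t\int_0^t \dd{P}_{\vc{x}}(Q_i(u)=0)\,du \in [0,1]$. If $\limsup_{t\to\infty} h^{(i)}_t(\vc{x}) = 0$ held for $\xi$-a.e. $\vc{x}$, then $h^{(i)}_t \to 0$ $\xi$-a.e. and bounded convergence would force $\int_S h^{(i)}_t\,\xi(d\vc{x}) \to 0$, contradicting the hypothesis. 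Hence there is $\vc{x}_i$ with $\limsup_{t\to\infty} h^{(i)}_t(\vc{x}_i) > 0$. Taking $\ell_i = 0$, this is precisely condition \eq{positive 2} of \lem{positive 2} for each $i=1,2$, so $\vc{X}(\cdot)$ is positive Harris recurrent.

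For the necessity in (ii), assume positive recurrence; by part (i) it suffices to prove $\dd{P}_\nu(Q_i(0)=0) = \nu(\{Q_i=0\}) > 0$, since then $\rho_i^*(\xi) = 1 - \nu(\{Q_i=0\}) < 1$. This is the crux and the step I expect to be the main obstacle. The set $\{Q_i=0\}$ is open, because the queue coordinates carry the discrete topology, and it contains $\vc{x}^* = (0,0,0,\vc{r}_e,0,0,0)$, which by \lem{Harris 1}(i) is reachable from every state. Because $\vc{X}(\cdot)$ is a $\varphi$-irreducible $T$-process, the reachable point $\vc{x}^*$ lies in the support of the maximal irreducibility measure, so every open neighborhood of $\vc{x}^*$ carries positive mass under it; under positive recurrence the invariant probability $\nu$ is itself a maximal irreducibility measure, so $\nu(\{Q_i=0\}) > 0$. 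The difficulty is exactly this upgrade from reachability of $\vc{x}^*$ to positivity of $\nu$ near it: it genuinely needs the $T$-process structure (lower semicontinuity of $T$ and the $\varphi$-irreducibility of \lem{Harris 1}), since the ergodic theorem by itself controls only the time-averaged occupation of compact sets and does not rule out $\nu(\{Q_i=0\})=0$.
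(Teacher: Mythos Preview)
Your proof is correct and follows essentially the same route as the paper's. The only cosmetic differences are that, for the sufficiency in (ii), you argue by contradiction via bounded convergence whereas the paper applies Fatou's lemma directly to get $\int_S \big(\limsup_t \tfrac1t\int_0^t \dd{P}_{\vc{x}}(Q_i(u)=0)\,du\big)\,\xi(d\vc{x}) \ge 1-\rho_i^*(\xi) > 0$, and for the necessity in (ii) you invoke the maximal irreducibility measure and its support while the paper carries out the corresponding computation explicitly: it shows $\varphi(B)>0 \Rightarrow \nu(B)\ge \int_S \nu(d\vc{x})K_a(\vc{x},B)>0$, then checks $\varphi(G_0)>0$ for the open set $G_0=\{Q_1=0\}$ containing $\vc{x}^*$.
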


This lemma is proved in \app{stability 2}, independently of \thr{stability 1}, and will be used to prove the necessity of \eq{stability 1} in \thr{stability 1}. Thus, $\rho^{*}_{1}(\xi)$ is hard to compute, but still useful.

\section{System dynamics and proof of \thr{stability 1}}
\label{sec:system}
\setnewcounter

To prove \thr{stability 1}, we first detail the dynamics of the Markov process $\vc{X}(\cdot)$.

\subsection{Dynamics of the $2$-station cascade system}

Recall that $t_{e,i}(n)$ is the $n$-th arriving time of class-$i$ customer at station $i$ for $i=1,2$, and let $t_{e,1|2}(n)$ be the $n$-th arriving time of class-$1|2$ customer at station $2$ from station $1$. For $v=1,2,{1|2}$, define $A_{v}(\cdot) = \{A_{v}(t); t \ge 0\}$ as 
\begin{align*}
  A_{v}(t) = \sum_{n=1}^{\infty} 1(0 < t_{v}(n) \le t).
\end{align*}
That is, $A_{v}(\cdot)$ is the counting process of arrivals of class-$v$ customers.

We similarly introduce notations for departures. Let $\{t_{d,i}(n); n \ge 1\}$ be departure instants of class-$i$ customers to the outside for $i = 1,2$, and let $\{t_{d,1|2}(n); n \ge 1\}$ be departure instants of class-$1|2$ customers at station 2. We denote the corresponding counting processes by $D_{v}(\cdot) \equiv \{D_{v}(t); t \ge 0\}$ for $v=1,2,{1|2}$, that is 
\begin{align*}
  D_{v}(t) = \sum_{n=1}^{\infty} 1(0 < t_{v}(n) \le t).
\end{align*}
Note that $A_{v}(0) = D_{v}(0)=0$ by those definitions.
Since there is at most one class-${1|2}$ customer at station $2$, we have
\begin{align}
\label{eq:D{1|2} 1}
  D_{{1|2}}(t) \le A_{{1|2}}(t) \le D_{{1|2}}(t)+1, \qquad t \ge 0.
\end{align}
We also have the following flow balance equations.
\begin{align}
\label{eq:L1 1}
 & Q_{1}(t) = Q_{1}(0) + A_{1}(t) - (D_{1}(t) + A_{{1|2}}(t)),\\
\label{eq:L2 12}
 & Q_{1|2}(t) = Q_{1|2}(0) + A_{{1|2}}(t) - D_{{1|2}}(t),\\
\label{eq:L2 2}
 & Q_{2}(t) = Q_{2}(0) + A_{2}(t) - D_{2}(t).
\end{align}

To describe departures by service times, we introduce the following notations.
\begin{align*}
 & B_{i}(t) = \int_{0}^{t} 1(Q_{i}(u) > 0) du, \qquad t \ge 0, i=1,2,\\
 & B_{1|2}(t) = \int_{0}^{t} 1(Q_{1|2}(u) > 0, Q_{2}(u) = 0) du
\end{align*}
and let $I_{2}(t) = t - B_{2}(t)$. We also introduce
\begin{align*}
  J_{{1|2}}(t) = \int_{0}^{t} 1(Q_{1}(u) > c_{1}, Q_{2}(u) = 0) du, \qquad t \ge 0,
\end{align*}
where $c_{1}$ is the threshold for switching from station 1 to station 2. Obviously, 
\begin{align}
\label{eq:B12 1}
  J_{{1|2}}(t) \le B_{{1|2}}(t) \le I_{2}(t), \qquad t \ge 0.
\end{align}

Denote  by $N_{v}(t)$ the counting processes for potential numbers of class-$v$ customers  served  up to time $t$, namely,
\begin{align}
\label{eq:potential 1}
  N_{v}(t) = \sup\left\{ n \ge 0; \sum_{\ell=1}^{n} \tau_{s,v}(\ell) \le t\right\}, \qquad t \ge 0, v=1,2,{1|2},
\end{align}
where $\tau_{s,1|2}(0)= 0$ and $\tau_{s,1|2}(n) = t_{s,1|2}(n) - t_{s,1|2}(n-1)$ for $n \ge 1$. Since $D_{v}(t) = N_{v}(B_{v}(t))$ and
\begin{align*}
  \{Q_{1}(u) > c_{1}, Q_{2}(u) = 0\} = \{Q_{1}(u) > c_{1}, Q_{2}(u) = 0, Q_{1|2}(u) > 0\}, \qquad 
   u \ge 0,
\end{align*}
it follows from \eq{B12 1} that
\begin{align}
\label{eq:D12 2}
  N_{{1|2}}(J_{{1|2}}(t)) \le D_{{1|2}}(t) \le N_{{1|2}}(I_{2}(t)), \qquad t \ge 0.
\end{align}

\subsection{Auxiliary lemmas}
\label{sec:auxiliary}

Recall the definition \eq{X2 1} of $\vc{X}_{2}(\cdot)$. Under the \ass{spread 1}, $\vc{X}_{2}(\cdot)$ is a Harris irreducible Markov process with state space $\dd{Z}_{+} \times \dd{R}_{+}^{2}$, and describes the $GI/G/1$ queue with traffic intensity $\rho_{2} = \lambda_{2}/\mu_{2}$. As is well known, $\vc{X}_{2}(\cdot)$ is positive recurrent if and only if $\rho_{2} < 1$. Assume that $\rho_{2} < 1$, then, by Little's law for a customer in service,
\begin{align*}
  \dd{P}_{\nu_{2}}(Q_{2}(0) \ge 1) = \lambda_{2} / \mu_{2} = \rho_{2},
\end{align*}
where $\dd{P}_{\nu_{2}}(Q_{2}(0) \ge 1) = \int_{\dd{Z}_{+} \times \dd{R}_{+}^{2}} \dd{P}(Q_{2} \ge 1|\vc{X}_{2} = \vc{y}) \nu_{2} (d\vc{y})$ for the stationary distribution $\nu_{2}$ of $\vc{X}_{2}(\cdot)$, and therefore (i) of \rem{ergodic 2} yields
\begin{align}
\label{eq:I2 1}
  \lim_{t \to \infty} \frac 1t I_{2}(t) = \lim_{t \to \infty} \frac 1t \int_{0}^{t} 1(Q_{2}(u) = 0) du = 1 - \rho_{2}, \qquad w.p.1.
\end{align}

\begin{lemma}\rm
\label{lem:D12 3}
If $\rho_{2} = \lambda_{2}/\mu_{2} < 1$, then
\begin{align}
\label{eq:D12 3}
  \ol{D}_{{1|2}} \equiv \limsup_{t \to \infty} \frac 1t D_{{1|2}}(t) \le \mu_{{1|2}} (1 - \rho_{2}), \qquad w.p.1,
\end{align}
and therefore
\begin{align}
\label{eq:lambda12 1}
  \lambda_{1|2} \equiv \dd{E}\left(\ol{D}_{{1|2}}\right) \le \mu_{{1|2}} (1 - \rho_{2}).
\end{align}
\end{lemma}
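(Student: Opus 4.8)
The plan is to bound $D_{{1|2}}(t)$ from above by the potential service-completion process $N_{{1|2}}$ evaluated at the accumulated idle time $I_{2}(t)$ of station $2$, and then pass to the limit using the strong law of large numbers for the renewal process $N_{{1|2}}$ together with \eq{I2 1}. The key observation is that the statement only asks for an upper bound on a $\limsup$, so only the right-hand half of \eq{D12 2} is needed.

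First I would invoke the right-hand inequality of \eq{D12 2}, namely $D_{{1|2}}(t) \le N_{{1|2}}(I_{2}(t))$ for all $t \ge 0$, which expresses that the number of class-$1|2$ customers actually completed cannot exceed the number of potential completions accrued during the time $I_{2}(t)$ that station $2$ is free of class-$2$ customers. Dividing by $t$ and factoring,
\[
  \frac{1}{t} N_{{1|2}}(I_{2}(t)) = \frac{N_{{1|2}}(I_{2}(t))}{I_{2}(t)} \cdot \frac{I_{2}(t)}{t},
\]
reduces the problem to identifying the two factors on the right.

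Next I would apply the strong law of large numbers to $N_{{1|2}}$. Since $\{\tau_{s,{1|2}}(n)\}$ are $i.i.d.$ with mean $1/\mu_{{1|2}} \in (0,\infty)$, the renewal counting process defined in \eq{potential 1} satisfies $s^{-1} N_{{1|2}}(s) \to \mu_{{1|2}}$ w.p.1 as $s \to \infty$. Because $\rho_{2} < 1$, \eq{I2 1} gives $t^{-1} I_{2}(t) \to 1 - \rho_{2} > 0$ w.p.1, and in particular $I_{2}(t) \to \infty$. On the common full-probability event where both limits hold, the random argument $s = I_{2}(t)$ tends to infinity, so $N_{{1|2}}(I_{2}(t))/I_{2}(t) \to \mu_{{1|2}}$ for each such sample path; crucially, no independence between $N_{{1|2}}$ and $I_{2}$ is needed, since this is a deterministic statement about composing a fixed path $s \mapsto N_{{1|2}}(s)$ along the diverging sequence $I_{2}(t)$. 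Multiplying the two factors yields
\[
  \lim_{t\to\infty} \frac{1}{t} N_{{1|2}}(I_{2}(t)) = \mu_{{1|2}}(1-\rho_{2}), \qquad w.p.1,
\]
whence $\ol{D}_{{1|2}} = \limsup_{t\to\infty} t^{-1} D_{{1|2}}(t) \le \mu_{{1|2}}(1-\rho_{2})$ w.p.1, which is \eq{D12 3}.

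Finally, for \eq{lambda12 1} I would simply take expectations: since the inequality $\ol{D}_{{1|2}} \le \mu_{{1|2}}(1-\rho_{2})$ holds almost surely with a constant right-hand side, monotonicity of expectation gives $\lambda_{{1|2}} = \dd{E}(\ol{D}_{{1|2}}) \le \mu_{{1|2}}(1-\rho_{2})$, and no integrability issue arises because $\ol{D}_{{1|2}}$ is almost surely bounded. I expect the only step requiring genuine care is the random-time-change argument — confirming $I_{2}(t) \to \infty$ and justifying the pathwise composition of the renewal SLLN with the random argument $I_{2}(t)$ — but this is routine once both limits are known to hold on a single full-probability set.
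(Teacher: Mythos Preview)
Your proposal is correct and follows essentially the same route as the paper: bound $D_{1|2}(t)$ above by $N_{1|2}(I_{2}(t))$ via the right half of \eq{D12 2}, factor as $\frac{I_{2}(t)}{t}\cdot\frac{N_{1|2}(I_{2}(t))}{I_{2}(t)}$, and apply \eq{I2 1} together with the renewal SLLN for $N_{1|2}$. Your write-up is in fact slightly more explicit about the random-time-change justification (that $I_{2}(t)\to\infty$ on a full-probability set so the pathwise composition is valid), but the argument is otherwise the same as the paper's.
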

\begin{proof}
Since 
$$
\sum_{\ell=1}^{N_{{1|2}}(t)} \tau_{s,1|2}(\ell) \le t < \sum_{\ell=1}^{N_{{1|2}}(t)+1} \tau_{s,1|2}(\ell),
$$ 
strong LLN yields
\begin{align}
\label{eq:N12 1}
  \lim_{t \to \infty} \frac 1t N_{{1|2}}(t) = \mu_{{1|2}}, \qquad w.p.1.
\end{align}
Combing this with \eq{D12 2} and \eq{I2 1}, we have
\begin{align*}
  \ol{D}_{{1|2}}(t) \le \limsup_{t \to \infty} \frac {I_{2}(t)} t \frac 1{I_{2}(t)} N_{{1|2}}(I_{2}(t)) = \mu_{{1|2}}(1-\rho_{2}), \qquad w.p.1,
\end{align*}
which proves \eq{D12 3}.
\end{proof} 

We will prove \thr{stability 1} by applying Lemmas \lemt{positive 2} and \lemt{stability 2}. For the sufficiency of the conditions \eq{stability 1} and \eq{stability 2}, we will use the following lemma.

\begin{lemma}\rm
\label{lem:D12 4}
Assume that $\rho_{2} = \lambda_{2}/\mu_{2} < 1$ and
\begin{align}
\label{eq:not-tight 1}
  \limsup_{t \to \infty} \frac 1t \int_{0}^{t} \dd{P}_{\vc{x}}(Q_{1}(u) \le \ell) du = 0,
\qquad \forall \vc{x} \in S, \forall \ell \ge 0.
\end{align}
Then, there exists a nonnegative sequence $\{s_{n}; n \ge 1\}$ such that $s_{n} \uparrow \infty$ and, under $\dd{P}_{x}$,
\begin{align}
\label{eq:D12 4}
  \liminf_{t \to \infty} \frac 1{s_{n}} D_{{1|2}}(s_{n}) \ge \mu_{{1|2}} (1 - \rho_{2}), \qquad w.p.1.
\end{align}
\end{lemma}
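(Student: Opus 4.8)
The plan is to combine the deterministic lower bound $D_{1|2}(t) \ge N_{1|2}(J_{1|2}(t))$ from \eq{D12 2} with a sample-path lower estimate for the switching-idle time $J_{1|2}(t)$ that the overload hypothesis \eq{not-tight 1} forces, but only along a suitably chosen subsequence. The algebraic starting point is the pointwise inequality
\begin{align*}
  1(Q_{1}(u) > c_{1}, Q_{2}(u) = 0) \ge 1(Q_{2}(u)=0) - 1(Q_{1}(u) \le c_{1}), \qquad u \ge 0,
\end{align*}
which follows from $1(Q_{1}>c_{1},Q_{2}=0) = 1(Q_{2}=0)\bigl(1 - 1(Q_{1}\le c_{1})\bigr)$. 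Integrating and dividing by $t$ gives $\frac{1}{t} J_{1|2}(t) \ge \frac{1}{t} I_{2}(t) - Y_{t}$, where $Y_{t} = \frac{1}{t}\int_{0}^{t} 1(Q_{1}(u) \le c_{1})\, du$. Since $\frac{1}{t} I_{2}(t) \to 1-\rho_{2}$ $w.p.1$ by \eq{I2 1}, the whole proof reduces to showing that $Y_{t}$ can be made to vanish almost surely.

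The hard part will be precisely this passage from the \emph{expected} time-average controlled by \eq{not-tight 1} to an almost-sure statement: the hypothesis only gives $\dd{E}_{\vc{x}}(Y_{t}) = \frac{1}{t}\int_{0}^{t} \dd{P}_{\vc{x}}(Q_{1}(u) \le c_{1})\, du \to 0$ (taking $\ell = c_{1}$), and one cannot identify this with a pathwise limit for the full parameter $t \to \infty$. My remedy is to note that $Y_{t} \in [0,1]$ with $\dd{E}_{\vc{x}}(Y_{t}) \to 0$ implies $Y_{t} \to 0$ in $L^{1}(\dd{P}_{\vc{x}})$, hence in $\dd{P}_{\vc{x}}$-probability, so I can extract a deterministic sequence $s_{n} \uparrow \infty$ with $Y_{s_{n}} \to 0$ $\dd{P}_{\vc{x}}$-$a.s.$ This $\{s_{n}\}$ is exactly the sequence claimed by the lemma, and the restriction to a subsequence is unavoidable for this reason.

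With $\{s_{n}\}$ fixed, I would work on the single probability-one event on which $Y_{s_{n}} \to 0$ together with the full limits $\frac{1}{t} I_{2}(t) \to 1-\rho_{2}$ from \eq{I2 1} and $\frac{1}{v} N_{1|2}(v) \to \mu_{1|2}$ from \eq{N12 1}. The integral bound then yields $\liminf_{n} \frac{1}{s_{n}} J_{1|2}(s_{n}) \ge 1-\rho_{2}$, and combined with $J_{1|2}(t) \le I_{2}(t)$ from \eq{B12 1} it forces $\frac{1}{s_{n}} J_{1|2}(s_{n}) \to 1-\rho_{2}$; in particular $J_{1|2}(s_{n}) \to \infty$, which legitimizes the random time change below. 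Writing
\begin{align*}
  \frac{1}{s_{n}} N_{1|2}(J_{1|2}(s_{n})) = \frac{N_{1|2}(J_{1|2}(s_{n}))}{J_{1|2}(s_{n})} \cdot \frac{J_{1|2}(s_{n})}{s_{n}} \longrightarrow \mu_{1|2}(1-\rho_{2}),
\end{align*}
and invoking the lower bound $D_{1|2}(s_{n}) \ge N_{1|2}(J_{1|2}(s_{n}))$ from \eq{D12 2}, I arrive at $\liminf_{n} \frac{1}{s_{n}} D_{1|2}(s_{n}) \ge \mu_{1|2}(1-\rho_{2})$, which is the assertion (with the limit index corrected from $t$ to $n$).

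The only remaining routine check is that $J_{1|2}(s_{n}) \to \infty$ indeed permits applying \eq{N12 1} at the random argument; this holds because the full limit in \eq{N12 1} is over all $v \to \infty$, so it may be evaluated at $v = J_{1|2}(s_{n})$. Beyond the subsequence extraction, every ingredient is either deterministic bookkeeping on the sample paths or one of the already-established strong-law limits, so I expect no further difficulty.
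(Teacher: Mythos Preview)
Your proposal is correct and follows essentially the same route as the paper: both reduce to showing that $\frac{1}{t}\int_{0}^{t}1(Q_{1}(u)\le c_{1})\,du\to 0$ along a subsequence almost surely (the paper calls this quantity $\frac{1}{t}K_{1}(t)$ and reaches convergence in probability via Markov's inequality, you via $L^{1}$), then combine with \eq{I2 1}, \eq{B12 1}, \eq{D12 2} and \eq{N12 1} exactly as you describe. Your explicit remark that $J_{1|2}(s_{n})\to\infty$ is needed to apply \eq{N12 1} at the random argument is a useful check that the paper leaves implicit.
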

\begin{proof}
In what follow, we fix $\vc{X}(0) = \vc{x}$. Define $K_{1}(t)$ as
\begin{align*}
  K_{1}(t) = \int_{0}^{t} 1(Q_{1}(u) \le c_{1}) du, \qquad t \ge 0.
\end{align*}
Then, from the Markov inequality, for each $\varepsilon>0$, 
\begin{align*}
  \dd{P}(K_{1}(t) > t \varepsilon) \le \frac 1{\varepsilon} \dd{E}\Big(\frac 1t K_{1}(t)\Big),
\end{align*}
so \eq{not-tight 1} implies that
\begin{align*}
 \limsup_{t \to \infty}  \dd{P}\Big(\frac 1t K_{1}(t) > \varepsilon\Big) \le \frac 1{\varepsilon} \limsup_{t \to \infty} \dd{E}\Big(\frac 1t K_{1}(t)\Big) = \limsup_{t \to \infty} \frac 1{\varepsilon t} \int_{0}^{t} \dd{P}(Q_{1}(u) \le c_{1}) du = 0.
\end{align*}


Hence, $\frac 1t K(t)$ converges to $0$ in probability as $t \to \infty$. Then, as is well known (e.g., see Corollary 6.1.2 of \cite{Boro2013}), we can choose a subsequence $\{s_{n}; n \ge 1\}$ from $\dd{R}_{+}$ such that $s_{n} \to \infty$ as $n \to \infty$ and
\begin{align}
\label{eq:K1 1}
  \lim_{n \to \infty} \frac 1{s_{n}} K_{1}(s_{n}) = 0, \qquad w.p.1.
\end{align}
Since
\begin{align*}
  & J_{{1|2}}(t) = \int_{0}^{t} 1(Q_{2}(u) = 0) du - \int_{0}^{t} 1(Q_{1}(u) \le c_{1}, Q_{2}(u) = 0) du
\end{align*}
and $\int_{0}^{t} 1(Q_{1}(u) \le c_{1}, Q_{2}(u) = 0) du \le K_{1}(t)$, \eq{I2 1} and \eq{K1 1} yield
\begin{align*}
  \lim_{n \to \infty} \frac 1{s_{n}} J_{{1|2}}(s_{n}) = \lim_{n \to \infty} \frac 1{s_{n}} \int_{0}^{s_{n}} 1(Q_{2}(u) = 0) du = 1 - \rho_{2}, \qquad w.p.1,
\end{align*}
because $\vc{X}_{2}(\cdot)$ is positive Harris recurrent by $\rho_{2} < 1$. Hence, this together with \eq{D12 2} and \eq{N12 1} imply that
\begin{align*}
  \liminf_{n \to \infty} \frac 1{s_{n}} D_{{1|2}}(s_{n}) & \ge \liminf_{n \to \infty} \frac {J_{{1|2}}(s_{n})}{s_{n}} \frac 1{J_{{1|2}}(s_{n})} N_{{1|2}}(J_{{1|2}}(s_{n}))\\
  & = (1-\rho_{2}) \liminf_{t \to \infty} \frac 1{J_{{1|2}}(s_{n})} N_{{1|2}}(J_{{1|2}}(s_{n)})\\
  & = (1-\rho_{2}) \mu_{{1|2}}, \qquad w.p.1,
\end{align*}
where the last equality is obtained by \eq{N12 1}. Thus, we have proved \eq{D12 4}.
\end{proof} 

\subsection{The proof of \thr{stability 1}}
\label{sec:proof}

We now start to prove \thr{stability 1}. We first prove the sufficiency of \eq{stability 1} and \eq{stability 2}. We apply reduction to the absurdity. For this assume \eq{not-tight 1} for all $\vc{x} \in S$ and $\ell \ge 0$. Since $\rho_{2} < 1$, it follows from Lemmas \lemt{D12 3} and \lemt{D12 4} that, for the nonnegative sequence $\{s_{n}; n \ge 1\}$ obtained in \lem{D12 4},
\begin{align}
\label{eq:D12 5}
  \lim_{n \to \infty} \frac 1{s_{n}} A_{{1|2}}(s_{n}) = \lim_{n \to \infty} \frac 1{s_{n}} D_{{1|2}}(s_{n}) = \mu_{{1|2}} (1 - \rho_{2}), \qquad w.p.1.
\end{align}
Since $B_{1}(t) = t - \int_{0}^{t} 1(Q_{1}(u)=0) du$, we have, by \eq{K1 1},
\begin{align}
\label{eq:B1 1}
  \lim_{n \to \infty} \frac 1{s_{n}} B_{1}(s_{n})  = 1 - \lim_{n \to \infty} \frac 1{s_{n}} \int_{0}^{s_{n}} 1(Q_{1}(u)=0) du = 1, \qquad w.p.1.
\end{align}
Furthermore, because $D_{v}(t) = N_{v}(B_{v}(t))$, strong LLN and \eq{B1 1} yield
\begin{align}
\label{eq:A1 1}
 & \lim_{n \to \infty} \frac 1{s_{n}} A_{1}(s_{n}) = \lambda_{1}, \qquad w.p.1,\\
\label{eq:D1 1}
 &  \lim_{n \to \infty} \frac 1{s_{n}} D_{1}(s_{n}) = \lim_{n \to \infty} \frac 1{s_{n}} B_{1}(s_{n}) \frac 1{B_{1}(s_{n})} N_{1}(B_{1}(s_{n})) = \mu_{1}, \qquad w.p.1.
\end{align}
Hence, \eq{L1 1} yields 
\begin{align}
\label{eq:not-stable 1}
  \lambda_{1} - \mu_{1} - \mu_{{1|2}} (1- \rho_{2}) = \lim_{n \to \infty} \frac 1{s_{n}} Q_{1}(s_{n}) \ge 0, \qquad w.p.1,
\end{align}
which implies that
\begin{align}
\label{eq:not-stable 2}
  \lambda_{1} \ge \mu_{1} + \mu_{{1|2}} (1- \rho_{2}).
\end{align}
Consequently, \eq{not-tight 1} for all $\vc{x} \in S$ and $\ell > 0$ implies \eq{not-stable 2}. Thus, taking the contraposition of this implication, 
\begin{align}
\label{eq:stability 3}
  \lambda_{1} < \mu_{1} + \mu_{{1|2}} (1- \rho_{2})
\end{align}
implies that \eq{not-tight 1} does not hold, that is, for some $\vc{x} \in S$ and some $\ell \ge 0$,
\begin{align}
\label{eq:positive 3}
  \limsup_{t \to \infty} \frac 1t \int_{0}^{t} \dd{P}_{\vc{x}}(Q_{1}(u) \le \ell) du > 0.
\end{align}
Hence, $\vc{X}(\cdot)$ is positive Harris recurrent by \lem{positive 2}. 

For the necessity of \eq{stability 1} and \eq{stability 2}, assume that $\vc{X}(\cdot)$ is positive Harris recurrent. As noted at the beginning of \sectn{auxiliary}, this implies that $\rho_{2} < 1$. So, we only need to show the necessity of \eq{stability 1}. From the positive recurrence of $\vc{X}(\cdot)$, we have $\rho^{*}_{1}(\xi) = 1-\dd{P}_{\nu}(Q_{1}(0) = 0) < 1$ by \lem{stability 2}. By \lem{D12 3}, $\ol{D}_{{1|2}} \le \mu_{{1|2}} (1-\rho_{2})$. Hence, applying Little's law to the class-1 customers who are served at station 1 and their mean number in service, which is $\rho^{*}_{1}(\xi)$, we have
\begin{align}
\label{eq:stability 4}
  \mu_{1} \rho^{*}_{1}(\xi) = \lambda_{1} - \lambda_{{1|2}} \ge \lambda_{1} - \mu_{{1|2}}(1-\rho_{2}).
\end{align}
On the other hand, from the definition \eq{stability 1} of 
$ \widetilde{\rho}_{1}$,
$$
\mu_{{1|2}}(1-\rho_{2}) \widetilde{\rho}_{1} = \lambda_{1} - \mu_{1} \widetilde{\rho}_{1}.
$$ 
Hence, it follows from \eq{stability 4} that
\begin{align*}
  \mu_{1} \rho^{*}_{1}(\xi) \widetilde{\rho}_{1} \ge \lambda_{1} \widetilde{\rho}_{1} - \mu_{{1|2}}(1-\rho_{2}) \widetilde{\rho}_{1} = \lambda_{1} \widetilde{\rho}_{1} - \lambda_{1} + \mu_{1} \widetilde{\rho}_{1},
\end{align*}
which is equivalent to $\lambda_{1} (1 - \widetilde{\rho}_{1}) \ge \mu_{1} \widetilde{\rho}_{1} (1-\rho^{*}_{1}(\xi))$. Thus, $\rho^{*}_{1}(\xi) < 1$ implies $\widetilde{\rho}_{1} < 1$. This proves that \eq{stability 1} is necessary for the stability, and the proof of \thr{stability 1} is completed.

\section{General $k$-station cascade system}
\label{sec:general}
\setnewcounter

We next consider an extension of \thr{stability 1} to a cascade system with $k$-stations, which are numbered as $1,2,\ldots,k$ from top to bottom. Each station $i$ has exogenous arrivals, called class-$i$ customers, and served in the manner of first come first served. If the number of class-$i$ customers is greater than $c_{i}$ and if station $i+1$ is empty, then one class-$i$ customer not in service immediately moves to station $i+1$ as a class-$i|(i+1)$ customer at station $i+1$ for $i \le k-1$, where $c_{i}$ is a positive integer. It is assumed that a class-$i|(i+1)$ customer is only served when there is no class-$(i+1)$ customer, and its service is resumed when class-$(i+1)$ customers vanish at station $i+1$. All customers leave the system when their service is completed. We refer to this system as a $k$-station cascade system.

Note that the system is stochastically independent of which class-$i$ customer not in service moves to station $i+1$ as a class-$i|(i+1)$ customer. Furthermore, an exogenously arriving class-$(i+1)$ customer preempts class-$i|(i+1)$ customer when the latter is being served at station $(i+1)$. Hence, the stability of station $i$ is independent of the dynamics of stations $1,2,\ldots, i-1$, but it is influenced from downward stations $i+1, i+2, \ldots,k$. Let $Q_{i}(t)$ be the number of class-$i$ customers in station $i$ at time $t \ge 0$ and let $Q_{i-1|i}(t)$ the number of class-$(i-1)|i$ customers in station $i$ at time $t$. The remaining times $R_{e,i}(t)$, $R_{s,i}(t)$ for $i=1,2,\ldots,k$ and $R_{s,i|(i+1)}(t)$ for $i=1,2,\ldots,k-1$ are defined for each time $t \ge 0$ similarly to the case of $k=2$.

As for the regularity condition, we need to replace \ass{spread 1} by its $k$-station version. Namely,
\begin{assumption}
\label{ass:spread 2}
For $i=1,2,\ldots,k$, the conditions (a) and (b) of \ass{spread 1} hold.
\end{assumption}

Similarly to the case of $k=2$, we can construct the Markov process $\vc{X}(\cdot) \equiv \{\vc{X}(t); t \ge 0\}$ by using the remaining arrival and service times, where
\begin{align}
\label{eq:X k}
 & \vc{X}(t) = (\vc{Q}_{1,k}(t), \vc{R}_{e,1,k}(t), \vc{R}_{s,1,k}(t), \vc{R}_{s,1,k}^{+}(t)) \in S \equiv \dd{Z}_{+}^{2k} \times \{0,1\}^{k-1} \times \dd{R}_{+}^{3k -1},
\end{align}
where, for $i=1,2,\ldots,k$,
\begin{align*}
& \vc{Q}_{i,k}(t) = (Q_{i}(t), Q_{i+1}(t), \ldots,Q_{k}(t), Q_{i|(i+1)}(t),Q_{(i+1)|(i+2)}(t),\ldots, Q_{(k-1)|k}(t)),\\
& \vc{R}_{e,i.k}(t) = (R_{e,i}(t), R_{e,i+1}(t), \ldots, R_{e,k}(t)),\quad
 \vc{R}_{s,i,k}(t) =(R_{s,i}(t), R_{s,i+1}(t), \ldots, R_{s,k}(t)),\\
& \vc{R}_{s,i,k}^{+}(t) = (R_{s,i|(i+1)}(t), R_{s,(i+1)|(i+2)}(t), \ldots, R_{s,(k-1)|k}(t)).
\end{align*}

To describe the system dynamics, we also introduce counting processes. For $v = i$ and $v = (i-1)|i$, let $A_{v}(t)$ be the number of the arrivals of class-$v$ customers up to time $t \ge 0$, and let $N_{v}(t)$ be the number of potential service completions of class-$v$ customers by time $t$ for $v=1,2,\ldots,k$ and $v=1|2, 2|3, \ldots, (k-1)|k$. It is assumed that $A_{i}(\cdot)$ and $N_{i}(\cdot)$ for $i =1,2,\ldots,k$ and $N_{(i-1)|i}(\cdot)$ for $i=2,3,\ldots,k$ are independent renewal processes. Their rates are denoted by $\lambda_{i}$, $\mu_{i}$ and $\mu_{(i-1)|i}$, respectively.

Let $D_{v}(t)$ be the number of departures of class-$v$ customers by time $t \ge 0$. Let
\begin{align*}
  L_{i}(t) = Q_{i}(t) + Q_{(i-1)|i}(t)1(i \ge 2), \qquad i=1,2,\ldots, k.
\end{align*}
Note that class-$i$ and class-$(i-1)|i$ customers have different service time distributions for $i \ge 2$. From the modeling assumptions, we have, for $i=1,2,\ldots,k$,
\begin{align}
\label{eq:L i}
 & Q_{i}(t) = Q_{i}(0) + A_{i}(t) - (D_{i}(t) + A_{i|i+1}(t)), \quad t \ge 0,\\
 & Q_{(i-1)|i}(t) = Q_{(i-1)|i}(0) + A_{(i-1)|i}(t) - D_{(i-1)|i}(t), \quad t \ge 0,
\end{align}
where $A_{0|1}(t) = A_{k|(k+1)}(t) = 0$. Note that the term $A_{(i-1)|i}(t) - D_{(i-1)|i}(t)$ has no influence on the stability of $Q_{i}(t)$ because the assumption on class-$(i-1)|i$ customers implies
\begin{align}
\label{eq:}
  0 \le A_{(i-1)|i}(t) - D_{(i-1)|i}(t) \le 1, \qquad t \ge 0.
\end{align}

Let $\rho_{i} = \frac {\lambda_{i}}{\mu_{i}}$ for $i=1,2,\ldots,k$. Similar to \eq{rho* 1}, we define $\rho^{*}_{i}(\xi)$ for a distribution $\xi$ as
\begin{align}
\label{eq:rho* i1}
  \rho^{*}_{i}(\xi) \equiv \liminf_{t \to \infty} \frac 1t \int_{0}^{t} \Big( \int_{S} \dd{P}_{\vc{x}}(Q_{i}(u) \ge 1) \xi(d\vc{x}) \Big) du,  \qquad i=2,3,\ldots,k.
\end{align}
We also define a Markov process describing the last $k-i+1$ stations by
\begin{align*}
  \vc{X}_{i,k}(\cdot) \equiv \{(\vc{Q}_{i,k}(t), \vc{R}_{e,i,k}(t), \vc{R}_{s,i,k}(t), \vc{R}^{+}_{s,i,k}(t)); t \ge 0\}, \qquad i=1,2,\ldots,k.
\end{align*}
Then, the following lemma is proved in the exactly same way as Lemmas \lemt{stability 2} and \lemt{D12 3}.
\begin{lemma}\rm
\label{lem:Di 1}
Under \ass{spread 2}, for each $i = 1,2, \ldots,k$, $\vc{X}_{i,k}(\cdot)$ is positive Harris recurrent if and only if $\rho^{*}_{j}(\xi) < 1$ for $j=i, i+1, \ldots, k$ for some distribution $\xi$ on $S$. Furthermore, particularly when this is the case: for $i \ge 2$,
\begin{align}
\label{eq:Di 1}
 & \ol{D}_{{(i-1)|i}} \equiv \limsup_{t \to \infty} \frac 1t D_{{(i-1)|i}}(t) \le \mu_{{(i-1)|i}} (1 - \rho^{*}_{i}(\xi)), \qquad w.p.1,\\
\label{eq:Di 2}
 & \lambda_{(i-1)|i} \equiv \dd{E}\left(\ol{D}_{{(i-1)|i}}\right) \le \mu_{{(i-1)|i}} (1 - \rho^{*}_{i}(\xi)).
\end{align}
\end{lemma}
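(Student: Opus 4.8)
The plan is to establish the two assertions separately, following the blueprints of \lem{stability 2} and \lem{D12 3}. The key structural point is that $\vc{X}_{i,k}(\cdot)$ is an \emph{autonomous} Markov process: stations $1,\ldots,i-1$ influence station $i$ only through the low-priority class-$(i-1)|i$ stream, which is preempted by class-$i$ customers and, by \eq{}, never holds more than one customer, so it has no bearing on the recurrence of $Q_i,\ldots,Q_k$. Under \ass{spread 2} the $k$-station analog of \lem{Harris 1} shows that $\vc{X}_{i,k}(\cdot)$ is a Harris irreducible $T$-process, so \pro{positive 1} applies; thus $\vc{X}_{i,k}(\cdot)$ is itself a $(k-i+1)$-station cascade process to which the $k=2$ arguments transfer after reindexing.

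For the characterization I would first record the $k$-station analog of \lem{positive 2}: $\vc{X}_{i,k}(\cdot)$ is positive Harris recurrent if and only if each queue $Q_j$, $j=i,\ldots,k$, is tight on average, i.e.\ $\limsup_{t\to\infty}\frac1t\int_0^t\dd{P}_{\vc{x}_j}(Q_j(u)=0)\,du>0$ for some $\vc{x}_j\in S$. This is proved as in \app{positive 2}: the overflow queues $Q_{j|(j+1)}$ are $\{0,1\}$-valued and \ass{spread 2} keeps the remaining-time coordinates tight, so joint tightness of the full state reduces to tightness of the unbounded queues $Q_i,\ldots,Q_k$, which by \pro{positive 1} is equivalent to positive recurrence. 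Next, writing $\dd{P}_{\vc{x}}(Q_j\ge1)=1-\dd{P}_{\vc{x}}(Q_j=0)$ in \eq{rho* i1} gives $\rho^*_j(\xi)=1-\limsup_{t\to\infty}\frac1t\int_0^t\int_S\dd{P}_{\vc{x}}(Q_j(u)=0)\,\xi(d\vc{x})\,du$, and Fatou's lemma yields $\rho^*_j(\xi)\ge\int_S\rho^*_j(\delta_{\vc{x}})\,\xi(d\vc{x})$; hence the existence of some $\xi$ with $\rho^*_j(\xi)<1$ is equivalent to the tightness of $Q_j$ above. Combining the two steps gives the stated equivalence, the single distribution $\xi$ being legitimate because in the positive recurrent case \pro{ergodic 1} with $f=1(Q_j=0)$, $g\equiv1$ gives $\rho^*_j(\xi)=1-\dd{P}_\nu(Q_j(0)=0)$, independent of $\xi$.

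For the throughput bounds, assume $\vc{X}_{i,k}(\cdot)$ is positive recurrent, so $\rho^*_i(\xi)<1$. Put $I_i(t)=\int_0^t 1(Q_i(u)=0)\,du$; by \pro{ergodic 1} with $f=1(Q_i=0)$, $g\equiv1$ one obtains $\frac1t I_i(t)\to 1-\rho^*_i(\xi)$ w.p.1 (the analog of \eq{I2 1}), and in particular $I_i(t)\to\infty$ because $\rho^*_i(\xi)<1$. Since a class-$(i-1)|i$ customer is served at station $i$ only while $Q_i(u)=0$, its cumulative busy time is at most $I_i(t)$, so $D_{(i-1)|i}(t)=N_{(i-1)|i}(B_{(i-1)|i}(t))\le N_{(i-1)|i}(I_i(t))$, the analog of \eq{D12 2}. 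The strong LLN gives $\frac1t N_{(i-1)|i}(t)\to\mu_{(i-1)|i}$, whence
\begin{align*}
  \ol{D}_{(i-1)|i}
  &=\limsup_{t\to\infty}\frac1t D_{(i-1)|i}(t)
  \le\limsup_{t\to\infty}\frac{I_i(t)}{t}\cdot\frac{N_{(i-1)|i}(I_i(t))}{I_i(t)}\\
  &=\mu_{(i-1)|i}\big(1-\rho^*_i(\xi)\big),
\end{align*}
which is \eq{Di 1}. As the bound is a deterministic constant, taking expectations gives \eq{Di 2} at once.

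The main obstacle is the first step of the characterization, the $k$-station analog of \lem{positive 2}. Although routine in spirit, it requires verifying that positive recurrence of $\vc{X}_{i,k}(\cdot)$ decouples into tightness of the individual queues $Q_i,\ldots,Q_k$: one must check that a compact set pinning all these queues below a common level, together with the $\{0,1\}$-valued overflow components and the remaining-time coordinates controlled by \ass{spread 2}, carries all but an arbitrarily small fraction of the time-averaged mass, as required by condition (b) of \pro{positive 1}. Once this reduction is in hand, the ergodic-averaging identities and the renewal estimates transfer verbatim from the $k=2$ proofs, with $I_i(t)$ and $1-\rho^*_i(\xi)$ replacing $I_2(t)$ and $1-\rho_2$.
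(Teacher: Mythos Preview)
Your proposal is correct and follows essentially the same approach as the paper, which simply states that \lem{Di 1} ``is proved in the exactly same way as Lemmas \lemt{stability 2} and \lemt{D12 3}.'' You have accurately identified both blueprints: for the characterization, the reduction via the $k$-station analog of \lem{positive 2} and the Fatou/ergodic arguments of \app{stability 2}; for the throughput bound, the inequality $D_{(i-1)|i}(t)\le N_{(i-1)|i}(I_i(t))$ combined with \pro{ergodic 1} yielding $\frac{1}{t}I_i(t)\to 1-\rho^*_i(\xi)$ in place of \eq{I2 1}, exactly as in the proof of \lem{D12 3}.
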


Then, we have the following result similar to \thr{stability 1}.

\begin{theorem}\rm
\label{thr:stability-k 1}
Under \ass{spread 2}, the Markov process $\vc{X}(\cdot)$ which describes the $k$-station cascade system is positive recurrent if and only if the following conditions hold for some distribution $\xi$ on $S$.
\begin{align}
\label{eq:stability-k 1}
 & \widetilde{\rho}_{i}(\xi) \equiv \frac {\lambda_{i}} {\mu_{i} + \mu_{{i|(i+1)}}(1-\rho^{*}_{i+1}(\xi))} < 1, \qquad i=1,2,\ldots, k-1,\\
\label{eq:stability-k 2}
 & \widetilde{\rho}_{k}(\xi) \equiv \rho_{k} < 1.
\end{align}
\end{theorem}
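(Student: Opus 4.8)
The plan is to prove \thr{stability-k 1} by downward induction on the station index, designing each inductive step so that it reduces verbatim to the two-station argument of \sectn{proof}. For $i=1,2,\ldots,k$ let $(\mathrm{H}_{i})$ denote the assertion that $\vc{X}_{i,k}(\cdot)$ is positive recurrent if and only if, for some distribution $\xi$ on $S$, one has $\widetilde{\rho}_{j}(\xi)<1$ for $j=i,i+1,\ldots,k-1$ together with $\rho_{k}<1$; the theorem is $(\mathrm{H}_{1})$. The base case $(\mathrm{H}_{k})$ is immediate, since $\vc{X}_{k,k}(\cdot)$ is the autonomous $GI/G/1$ queue of station $k$ --- class-$k$ customers have preemptive priority, so $Q_{k}(\cdot)$ is unaffected by class-$(k-1)|k$ customers --- and is positive recurrent exactly when $\rho_{k}<1$. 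The structural fact that drives the induction is the one recorded after \ass{spread 2}: the subprocess $\vc{X}_{i+1,k}(\cdot)$ evolves autonomously, its dynamics not depending on stations $1,\ldots,i$. Consequently its positive recurrence, and with it the value $\rho^{*}_{i+1}(\xi)$, is a property of the downstream subsystem alone, and by the analogue of \lem{stability 2}(i) this value does not depend on $\xi$ once $\vc{X}_{i+1,k}(\cdot)$ is stable.

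For the inductive step I would assume $(\mathrm{H}_{i+1})$ and establish $(\mathrm{H}_{i})$, treating the two implications separately with station $i+1$ and $\rho^{*}_{i+1}(\xi)$ now playing the role that station $2$ and $\rho_{2}$ played for $k=2$. For necessity, suppose $\vc{X}_{i,k}(\cdot)$ is positive recurrent. By \lem{Di 1} this gives $\rho^{*}_{j}(\xi)<1$ for all $j=i,\ldots,k$, so $\vc{X}_{i+1,k}(\cdot)$ is positive recurrent (again by \lem{Di 1}) and $(\mathrm{H}_{i+1})$ supplies $\widetilde{\rho}_{j}(\xi)<1$ for $j=i+1,\ldots,k-1$ and $\rho_{k}<1$. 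It remains to derive $\widetilde{\rho}_{i}(\xi)<1$, which I would obtain by copying the computation around \eq{stability 4}: the bound \eq{Di 2} gives $\lambda_{i|(i+1)}\le\mu_{i|(i+1)}(1-\rho^{*}_{i+1}(\xi))$, Little's law applied to class-$i$ customers in service at station $i$ gives $\mu_{i}\rho^{*}_{i}(\xi)=\lambda_{i}-\lambda_{i|(i+1)}\ge\lambda_{i}-\mu_{i|(i+1)}(1-\rho^{*}_{i+1}(\xi))$, and the definition \eq{stability-k 1} rewrites as $\mu_{i|(i+1)}(1-\rho^{*}_{i+1}(\xi))\widetilde{\rho}_{i}(\xi)=\lambda_{i}-\mu_{i}\widetilde{\rho}_{i}(\xi)$. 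The same algebra as in \sectn{proof} then yields $\lambda_{i}(1-\widetilde{\rho}_{i}(\xi))\ge\mu_{i}\widetilde{\rho}_{i}(\xi)(1-\rho^{*}_{i}(\xi))$, and since $\rho^{*}_{i}(\xi)<1$ this forces $\widetilde{\rho}_{i}(\xi)<1$.

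For sufficiency, suppose $\widetilde{\rho}_{j}(\xi)<1$ for $j=i,\ldots,k-1$ and $\rho_{k}<1$ for some $\xi$. Then $(\mathrm{H}_{i+1})$ makes $\vc{X}_{i+1,k}(\cdot)$ positive recurrent, so $\rho^{*}_{i+1}(\xi)$ is the stationary value that $\widetilde{\rho}_{i}(\xi)$ refers to. I would argue by contradiction as in the proof of \thr{stability 1}: assume $Q_{i}(\cdot)$ fails to be tight on average, i.e.\ \eq{not-tight 1} holds for $Q_{i}$. Since $\vc{X}_{i+1,k}(\cdot)$ is positive recurrent, the same ergodic argument that gave \eq{I2 1} now gives $\frac{1}{t}\int_{0}^{t}1(Q_{i+1}(u)=0)\,du\to 1-\rho^{*}_{i+1}(\xi)$, and repeating the proof of \lem{D12 4} produces a subsequence $s_{n}\uparrow\infty$ along which $\frac{1}{s_{n}}J_{i|(i+1)}(s_{n})\to 1-\rho^{*}_{i+1}(\xi)$, whence $\liminf_{n}\frac{1}{s_{n}}D_{i|(i+1)}(s_{n})\ge\mu_{i|(i+1)}(1-\rho^{*}_{i+1}(\xi))$ by the counting relation analogous to \eq{D12 2} and strong LLN. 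Together with the upper bound \eq{Di 1} this pins $\frac{1}{s_{n}}A_{i|(i+1)}(s_{n})\to\mu_{i|(i+1)}(1-\rho^{*}_{i+1}(\xi))$, and combined with $\frac{1}{s_{n}}B_{i}(s_{n})\to1$, $\frac{1}{s_{n}}A_{i}(s_{n})\to\lambda_{i}$ and $\frac{1}{s_{n}}D_{i}(s_{n})\to\mu_{i}$, the flow balance \eq{L i} gives $\lambda_{i}-\mu_{i}-\mu_{i|(i+1)}(1-\rho^{*}_{i+1}(\xi))=\lim_{n}\frac{1}{s_{n}}Q_{i}(s_{n})\ge0$, i.e.\ $\widetilde{\rho}_{i}(\xi)\ge1$ --- a contradiction. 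Hence $Q_{i}(\cdot)$ is tight on average, $\rho^{*}_{i}(\xi)<1$, and \lem{Di 1} at index $i$ yields that $\vc{X}_{i,k}(\cdot)$ is positive recurrent, completing the induction.

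The individual estimates are transcriptions of the $k=2$ computation, so the main obstacle I anticipate is not analytic but organizational: keeping the induction honest about the effective intensities. One must check that $\vc{X}_{i+1,k}(\cdot)$ is genuinely autonomous, so that $\rho^{*}_{i+1}(\xi)$ is a single well-defined number attached to the downstream subsystem; that its positive recurrence renders $\rho^{*}_{i+1}(\xi)$ independent of $\xi$, so that the ``for some $\xi$'' quantifier in \eq{stability-k 1}--\eq{stability-k 2} is consistent across levels and $\widetilde{\rho}_{i}(\xi)$ means the same thing in the hypothesis and in the argument; and that the bound \eq{Di 1} of \lem{Di 1} is invoked only in the regime of downstream stability where it is valid. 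Once these points are secured, the unambiguous value of $\rho^{*}_{i+1}(\xi)$ lets the two-station argument of \sectn{proof} run at each level without change.
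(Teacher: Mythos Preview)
Your proposal is correct and follows essentially the same approach as the paper: a downward induction on the station index that reduces each step to the two-station argument of \sectn{proof}, using \lem{Di 1} in place of \lem{stability 2} and \lem{D12 3}. The paper's own proof is only a brief outline, so your version is in fact more detailed --- in particular, your explicit discussion of the autonomy of $\vc{X}_{i+1,k}(\cdot)$ and the $\xi$-independence of $\rho^{*}_{i+1}(\xi)$ under downstream stability makes precise exactly the points the paper leaves implicit.
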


This theorem is proved in the exactly same way as \thr{stability 1} using the induction on $i$ from $i=k-1$ to $1$. We outline this proof below. 

The necessity of \eq{stability-k 1} and \eq{stability-k 2} can be proved in the same way as the proof of \thr{stability 1} using \lem{Di 1} instead of \lem{stability 2}. To see their sufficiency, suppose that $\widetilde{\rho}_{i}(\xi) < 1$ and $\rho^{*}_{j}(\xi) < 1$ for $j=i+1,i+2,\ldots,k$, then $\vc{X}_{i,k}(\cdot)$ is positive recurrent by a similar proof in \thr{stability 1} and \lem{Di 1}. Hence, $\rho^{*}_{i}(\xi) < 1$ by \lem{Di 1}. Inductively repeating this argument from $i=k-1$ to $i=1$, we can conclude that $\vc{X}_{1,k}(\cdot) (= \vc{X}(\cdot))$ is positive recurrent.

\begin{remark}\rm
\label{rem:stability-k 1}
Theorem of \cite{KimKim2023} formulates the stability problem in a slightly different way that the stability condition is checked backward induction from $k$ to $1$. Then, Theorem 1 of \cite{KimKim2023} says that $\vc{X}_{i,k}(\cdot)$ is stable if and only if $\widetilde{\rho}_{i}(\xi) < 1$ and $\vc{X}_{i+1,k}(\cdot)$ is stable. Obviously, this is essentially rephrasing \thr{stability 1} of \cite{MiyaMoro2022} (also of this paper). \thr{stability-k 1} is a little more than it due to \lem{Di 1}.
\end{remark}

\section{Concluding remarks}
\label{sec:concluding}
\setnewcounter

We have assumed that the exogenous arrival processes $A_{i}(\cdot)$ at station $i$ are independent renewal processes with finite arrival rates and the service times of each class of customers are $i.i.d.$. However, from the proof of \thr{stability 1}, those assumptions can be relaxed in proving that \eq{stability 1} implies \eq{positive 3} under the assumption $\rho_{2} < 1$, which is the core part in the proof of \thr{stability 1}. Namely, this implication is valid as long as the following conditions hold.
\begin{align}
\label{eq:general 1}
  & \lim_{t \to \infty} \frac 1t A_{i}(t) = \lambda_{i}, \qquad i=1,2,\; w.p.1,\\
\label{eq:general 2}
  & \lim_{n \to \infty} \frac 1n \sum_{\ell=1}^{n} \tau_{s,v}(\ell) = \mu_{v}, \qquad v=1,2,{1|2},\; w.p.1,\\
\label{eq:general 3}
  & \lim_{t \to \infty} \frac 1t \int_{0}^{t} 1(Q_{2}(s)=0) ds = \max(0,1 - \rho_{2}), \qquad w.p.1,
\end{align}
where $\rho_{2} = \lambda_{2}/\mu_{2}$.

The condition \eq{positive 3} is much weaker than the existence of the stationary distribution of $Q_{1}(\cdot)$, but it may be interpreted as a certain stability property of the process $Q_{1}(\cdot)$. Hence, it may be interesting to investigate the stability of the 2-station cascade system in a weaker sense for more general exogenous arrival processes based on the conditions \eq{general 1}, \eq{general 2} and \eq{general 3}.

\section*{\large Acknowledgements}

After the submission of this paper, we have learned that \citet{KimKim2023} disproves the conjecture of \cite{MiyaMoro2022a}. We thanks the Editor-in-chief for allowing us to revise the original submission taking \cite{KimKim2023} into account. We are grateful to the anonymous referee for pointing out various errors in the revised submissions and helpful suggestions for correcting them. 

\section*{Appendix}
\label{app:proofs}
\setnewcounter
\renewcommand{\thesubsection}{A.\arabic{subsection}}
\renewcommand{\theequation}{A.\arabic{equation}}
\setcounter{section}{1}

\subsection{The equivalence of (a) and (c) in \pro{positive 1}}
\label{app:positive 1}

We prove that (a) is equivalent to (c) in \pro{positive 1}. We first note that, under the assumption that $X(\cdot)$ is a Harris irreducible $T$-process, $X(\cdot)$ is either Harris recurrent or $\sigma$-transient by the irreducibility and the Doeblin decomposition (see Theorem 3.1 of \cite{TuomTwee1979a}), where $X(\cdot)$ is said to be $\sigma$-transient if $S$ is a countable union of measurable sets $A_{i}$ for $i \ge 1$ such that $\dd{E}_{x}(\eta_{A_{i}}) < \infty$ for all $x \in S$ and for all $i \ge 1$, in which case each $A_{i}$ is said to be uniformly transient. If $X(\cdot)$ is positive Harris recurrent, then it has an invariant probability measure. Denote it by $\nu$. Then, \eq{positive 1} is immediate from \eq{ergodic-p 1} in \rem{ergodic 2} because there exists a compact set $C \subset S$ such that $\nu(C) > 0$ by the tightness of $\nu$ on $S$.\\
Conversely, if \eq{positive 1} holds, then $\dd{E}_{x}(\eta_{C}) = \infty$ for the compact set $C$. Hence, $X(\cdot)$ can not be $\sigma$-transient, and therefore it is Harris recurrent by the Doeblin decomposition. Thus, there is a unique invariant measure. Suppose that this $X(\cdot)$ is null recurrent, then it follows from \eq{ergodic-n 1} in \rem{ergodic 2} that,
for all $x \in S$  and any compact set  $C \subset S$,
\begin{align}
\label{eq:null 1}
  \limsup_{t \to \infty} \frac 1t \int_{0}^{t} \dd{P}_{x}(X(u) \in C) du = 0.
\end{align}
This contradicts \eq{positive 1}, so $X(\cdot)$ must be positive Harris recurrent.

\subsection{Proof of (ii) of \lem{Harris 1}}
\label{app:Harris-ii}

By (i) of \lem{Harris 1} $\vc{X}(\cdot)$ is a $T$-process. Let $\varphi(B) = T(\vc{x}^{*},B)$, then $T(\vc{x}^{*},B) > 0$ implies that $T(\vc{y},B) > 0$ for $\forall \vc{y}$ in some open set $V(\vc{x}^{*})$ containing $\vc{x}^{*}$ by the lower semi-continuity of $T(\cdot,B)$. Furthermore, let $e$ be the exponential distribution with unit mean, then $K_{e}(\vc{x},V(\vc{x}^{*})) = \int_{0}^{\infty} \dd{P}_{\vc{x}}(\vc{X}(u) \in V(\vc{x}^{*})) e^{-u} du > 0$ for $\forall \vc{x} \in S$ since $\vc{x}^{*}$ is reachable from any $\vc{x} \in S$. Hence, for the sampling distribution $a$ by which $\vc{X}(\cdot)$ is a $T$-process, 
\begin{align*}
  K_{e * a}(\vc{x},B) \ge \int_{V_{\varepsilon}(\vc{x}^{*})} K_{e}(\vc{x},d\vc{y}) K_{a}(\vc{y}, B) \ge \int_{V_{\varepsilon}(\vc{x}^{*})} K_{e}(\vc{x},d\vc{y}) T(\vc{y}, B) > 0, \quad \forall \vc{x} \in S,
\end{align*}
where $e * a$ is the convolution of distributions $e$ and $a$ on $\dd{R}_{+}$, and $K_{e * a}(\cdot,\cdot)$ is $e * a$-sampling $K$-chain. This proves that $\vc{X}(\cdot)$ is $\varphi$-irreducible because distribution $e * a$ has an absolutely continuous component with respect to Lebesgue measure on $\dd{R}_{+}$.

\subsection{Proof of \lem{positive 2}}
\label{app:positive 2}

Obviously, \eq{positive 1} implies \eq{positive 2} for $X(\cdot) = \vc{X}(\cdot)$, so we only need to prove that \eq{positive 2} implies \eq{positive 1}. Recall that $\vc{X}_{2}(\cdot)$ is the Markov process describing the queue of class-2 customers. We first show that $\vc{X}_{2}(\cdot)$ is a positive Harris recurrent if \eq{positive 2} holds for $i=2$. Since $R_{e,2}(\cdot)$ is a regenerative process with cycles $\tau_{e,2}(\cdot)$ and $R_{s,2}(\cdot)$ is dominated by such a regenerative process $Y(\cdot)$ with cycles $\tau_{s,2}(\cdot)$ in the sense that
\begin{align*}
  \int_{0}^{t} 1(R_{s,2}(u) \le x) du \le \int_{0}^{t} 1(Y(u) \le x)du, \qquad x \ge 0, t \ge 0.
\end{align*}
both of $R_{e,2}(\cdot)$ and $R_{s,2}(\cdot)$ are tight on average. Hence, by \pro{positive 1}, for any $\vc{x}_{2} \in S$, any $\varepsilon > 0$ and $v = e, s$, there is a compact subset $C_{v,2} \subset \dd{R}_{+}$ such that
\begin{align}
\label{eq:tight av 1}
  \liminf_{t \to \infty} \frac 1t \int_{0}^{t} \dd{P}_{\vc{x}_{2}}(R_{v,2}(u) \in C_{v,2}) du \ge 1 - \varepsilon.
\end{align}
Let $\widetilde{C}_{2} = C_{2,e} \times C_{2,s}$, then
\begin{align*}
 \dd{P}_{\vc{x}_{2}}(\vc{X}_{2}(u) \in \{0\} \times \widetilde{C}_{2}) & \ge \dd{P}_{\vc{x}_{2}}(Q_{2}(u)=0) - \dd{P}_{\vc{x}_{2}}(R_{e,2}(u),R_{s,2}(u)) \not\in \widetilde{C}_{2})\\
  &  \ge \dd{P}_{\vc{x}_{2}}(Q_{1}(u)=0) - \big(\dd{P}_{\vc{x}_{2}}(R_{e,2}(u) \not\in C_{e,2}) + \dd{P}_{\vc{x}}(R_{s,2}(u) \not\in C_{s,2})\big)\\
  & \ge \dd{P}_{\vc{x}_{2}}(Q_{1}(u)=0) - 2 \varepsilon.
\end{align*}
Integrating this inequality for $u \in [0,t]$, it follows from \eq{positive 2} for $i=2$ that there is some $\vc{x}'_{2} \in S$,
\begin{align*}
    \limsup_{t \to \infty} \frac 1t \int_{0}^{t} \dd{P}_{\vc{x}'_{2}}(\vc{X}_{2}(u) \in \{0\} \times \widetilde{C}_{2}) du > 0.
\end{align*}
By \pro{positive 1}, this proves that $\vc{X}_{2}(\cdot)$ is positive recurrent. Furthermore, it is tight on average by (b) of \pro{positive 1}. Since $Q_{1|2}(t) \le 1$ for $\forall t \ge 0$, $\{Q_{1|2}(t); t \ge 0\}$ is obviously tight on average. Similar to $R_{e,2}(\cdot)$ and $R_{s,2}(\cdot)$, $R_{e,1}(\cdot)$, $R_{s,1}(\cdot)$ and $R_{s,1|2}(\cdot)$ are also tight on average. Hence, by a similar argument to $\vc{X}_{2}(\cdot)$, \eq{positive 2} for $i=1$ implies \eq{positive 1} for $X(\cdot) = \vc{X}(\cdot)$.

\subsection{Proof of \lem{stability 2}}
\label{app:stability 2}

Note that, under \ass{spread 1},  $\vc{X}(\cdot)$ is $\varphi$-irreducible $T$-process by \lem{Harris 1}. (i) Since $\vc{X}(\cdot)$ is positive Harris recurrent, it has the stationary distribution $\nu$, and, by \eq{ergodic-p 1} in \rem{ergodic 2},
\begin{align*}
  \lim_{t \to \infty} \frac 1t \int_{0}^{t} \dd{P}_{\vc{x}}(Q_{i}(u) \ge 1)du = \dd{P}_{\nu}(Q_{i} \ge 1), \qquad \forall \vc{x} \in S, i =1,2.
\end{align*}
Integrating both sides of this equation by $\xi$ and applying the dominated convergence theorem, we have
\begin{align*}
  \lim_{t \to \infty} \frac 1t \int_{0}^{t} \Big(\int_{S} \dd{P}_{\vc{x}}(Q_{i}(u) \ge 1) \xi(d\vc{x}) \Big) du = 1 - \dd{P}_{\nu}(Q_{i}(0)=0).
\end{align*}
This and the definition of $\rho^{*}_{i}(\xi)$ prove \eq{rho-1 1}.\\
(ii) By \lem{positive 2}, we only need to show that $\rho^{*}_{i}(\xi) < 1$ for $i=1,2$ is equivalent to \eq{positive 2} for some $\vc{x} \in S$ and some $\ell \ge 0$. Assume that $\rho^{*}_{i}(\xi) < 1$ for $i=1,2$. Because, by Fatou's lemma,
\begin{align*}
  \rho^{*}_{i}(\xi) \ge \int_{S} \Big(\liminf_{t \to \infty} \frac 1t \int_{0}^{t} \dd{P}_{\vc{x}}(Q_{i}(u) \ge 1) du \Big) \xi(d\vc{x}),
\end{align*}
we have
\begin{align*}
 & \int_{S} \Big( \limsup_{t \to \infty} \frac 1t \int_{0}^{t} \dd{P}_{\vc{x}}(Q_{i}(u) = 0) du \Big) \xi(d\vc{x}) \\
  & = \int_{S} \Big( 1- \liminf_{t \to \infty} \frac 1t \int_{0}^{t} \dd{P}_{\vc{x}}(Q_{i}(u) \ge 1) du \Big) \xi(d\vc{x}) \ge 1 - \rho^{*}_{i}(\xi) > 0.
\end{align*}
Hence, \eq{positive 2} holds for $i=1,2$, $\ell=0$ and some $\vc{x}_{i} \in S$.

Conversely, assume \eq{positive 2} for $i=1,2$ and some $\vc{x}_{i} \in S$ and some $\ell = 0$. Then, $\vc{X}(\cdot)$ is positive Harris recurrent by \lem{positive 2}. Hence, \eq{rho-1 1} holds by (i) of \lem{stability 2}, and therefore we only need to show that $\dd{P}_{\nu}(Q_{i}(0)=0) > 0$ for $i=1,2$. To prove this, we first note that $\vc{X}(\cdot)$ is Harris $\varphi$-irreducible for $\varphi(B) = T(\vc{x}^{*},B)$ by \lem{Harris 1}. Hence, $\varphi(B) > 0$ implies $K_{a}(\vc{x},B) > 0$ for $\forall \vc{x} \in S$ for some sampling distribution $a$ by \dfn{T-process 1}. Then, for the stationary distribution $\nu$ of $\vc{X}(\cdot)$, for any sampling distribution $a$ on $\dd{R}_{+}$,
\begin{align*}
  \nu(B) \ge \int_{S} \nu(d\vc{x}) K_{a}(\vc{x},B) > 0.
\end{align*}
Hence, $\varphi(B) > 0$ implies $\nu(B) > 0$ for any $B \in \sr{B}(S)$. From the definition of $\varphi$, $\varphi(G) > 0$ for any open set $G$ containing $\vc{x}^{*}$ of \lem{Harris 1}. As this $G$, we choose $G_{0}$ defined by
\begin{align*}
  G_{0} = \{0\} \times \dd{Z}_{+} \times \{0,1\} \times \dd{R}_{+}^{5} \subset S,
\end{align*}
which obviously contains $\vc{x}^{*}$. Since the discrete topology is taken on $\dd{Z}_{+}^{2} \times \{0,1\}$, $G_{0}$ is an open set. Hence, $\varphi(G_{0}) > 0$, which implies that $\dd{P}_{\nu}(Q_{1}(0)=0) = \nu(G_{0}) > 0$. This proves that $\rho_{1}^{*}(\xi) < 1$. Similarly, $\rho^{*}_{2}(\xi) < 1$ is proved.\pend

\def\cprime{$'$} \def\cprime{$'$} \def\cprime{$'$} \def\cprime{$'$}
  \def\cprime{$'$} \def\cprime{$'$} \def\cprime{$'$}


\end{document}